\newcommand{\email}[1]{\href{mailto:#1}{\nolinkurl{#1}}}
\definecolor{labelkey}{rgb}{0,0.08,0.45}
\definecolor{refkey}{rgb}{0,0.6,0.0}
\definecolor{dblue}{HTML}{0455BF}
\definecolor{dgreen}{HTML}{02724A}
\definecolor{Dblue}{HTML}{8602DC}
\definecolor{dred}{HTML}{D90404}
\renewcommand{\leq}{\ensuremath{\leqslant}}
\renewcommand{\geq}{\ensuremath{\geqslant}}
\newcommand{\scal}[2]{{\langle{{#1}\mid{#2}}\rangle}}
\newcommand{\sscal}[2]{{\big\langle{{#1}\mid{#2}}\big\rangle}}
\newcommand{\pair}[2]{\langle{{#1},{#2}}\rangle} 
\newcommand{\Pair}[2]{\big\langle{{#1},{#2}}\big\rangle} 
\newcommand{\menge}[2]{\big\{{#1}~|~{#2}\big\}} 
\newcommand{\IDD}{\ensuremath{\operatorname{int}\operatorname{dom}f}}
\newcommand{\Argmin}{\ensuremath{\operatorname{Argmin}}}
\newcommand{\XX}{\ensuremath{{\mathcal X}}}
\newcommand{\Sum}{\ensuremath{\displaystyle\sum}}
\newcommand{\emp}{\ensuremath{\varnothing}}
\newcommand{\Id}{\ensuremath{\operatorname{Id}}}
\newcommand{\RR}{\ensuremath{\mathbb{R}}}
\newcommand{\RP}{\ensuremath{\left[0,{+}\infty\right[}}
\newcommand{\BC}{\ensuremath{\EuScript C}}
\newcommand{\RPP}{\ensuremath{\left]0,{+}\infty\right[}}
\newcommand{\RPX}{\ensuremath{\left[0,{+}\infty\right]}}
\newcommand{\RX}{\ensuremath{\left]{-}\infty,{+}\infty\right]}}
\newcommand{\WC}{\ensuremath{\mathfrak W}}
\newcommand{\NN}{\ensuremath{\mathbb N}}
\newcommand{\intdom}{\ensuremath{\operatorname{int}\operatorname{dom}}}
\newcommand{\weakly}{\ensuremath{\:\rightharpoonup\:}}
\newcommand{\exi}{\ensuremath{\exists\,}}
\newcommand{\ran}{\ensuremath{\operatorname{ran}}}
\newcommand{\zer}{\ensuremath{\operatorname{zer}}}
\newcommand{\pinf}{\ensuremath{{{+}\infty}}}
\newcommand{\minf}{\ensuremath{{{-}\infty}}}
\newcommand{\dom}{\ensuremath{\operatorname{dom}}}
\newcommand{\cdom}{\ensuremath{\operatorname{\overline{dom}}}}
\newcommand{\prox}{\ensuremath{\operatorname{prox}}}
\newcommand{\gra}{\ensuremath{\operatorname{gra}}}
\newcommand{\zeroun}{\ensuremath{\left]0,1\right[}} 
\newcommand{\lzeroun}{\ensuremath{\left[0,1\right[}}
\newtheorem{theorem}{Theorem}[section]
\newtheorem{lemma}[theorem]{Lemma}
\newtheorem{corollary}[theorem]{Corollary}
\newtheorem{proposition}[theorem]{Proposition}
\theoremstyle{plain}{\theorembodyfont{\rmfamily}%
}
\theoremstyle{plain}{\theorembodyfont{\rmfamily}%
\newtheorem{example}[theorem]{Example}}
\theoremstyle{plain}{\theorembodyfont{\rmfamily}%
\newtheorem{remark}[theorem]{Remark}}
\theoremstyle{plain}{\theorembodyfont{\rmfamily}%
\newtheorem{algorithm}[theorem]{Algorithm}}
\theoremstyle{plain}{\theorembodyfont{\rmfamily}%
}
\theoremstyle{plain}{\theorembodyfont{\rmfamily}%
\newtheorem{definition}[theorem]{Definition}}
\theoremstyle{plain}{\theorembodyfont{\rmfamily}%
\newtheorem{problem}[theorem]{Problem}}
\theoremstyle{plain}{\theorembodyfont{\rmfamily}%
}
\numberwithin{equation}{section}
\let\setminus\smallsetminus
\let\to\rightarrow
\newcommand*\mute{{\mkern 2mu\cdot\mkern 2mu}}
\begin{document}

\title{\sffamily\huge%
Bregman Forward-Backward Operator Splitting\thanks{Contact 
author: P. L. Combettes, \email{plc@math.ncsu.edu},
phone: +1 (919) 515 2671.
This work was supported by the National 
Science Foundation under grant DMS-1818946.}}

\author{Minh N. B\`ui and Patrick L. Combettes\\
\small
North Carolina State University,
Department of Mathematics,
Raleigh, NC 27695-8205, USA\\
\small \email{mnbui@ncsu.edu}\: and \:\email{plc@math.ncsu.edu}
}
\date{\ttfamily ~}
\maketitle

\centerline{\emph{Dedicated to Terry Rockafellar on the occasion of
his 85th birthday}}

\bigskip

\noindent{\bfseries Abstract.}
We establish the convergence of the forward-backward splitting 
algorithm based on Bregman distances for the sum of two monotone
operators in reflexive Banach spaces. Even in Euclidean spaces,
the convergence of this algorithm has so far been proved only in the
case of minimization problems. The proposed framework features 
Bregman distances that vary over the iterations and
a novel assumption on the single-valued
operator that captures various properties scattered in the
literature. In the minimization setting, we obtain
rates that are sharper than existing ones. 

\medskip

\noindent{\bfseries Keywords.}
Banach space,
Bregman distance,
forward-backward splitting, 
Legendre function,
monotone operator.

\section{Introduction}

Throughout, $\XX$ is a reflexive real Banach space with topological
dual $\XX^*$. We are concerned with the following monotone
inclusion problem (see Section~\ref{sec:21} for notation and 
definitions).

\begin{problem}
\label{prob:1}
Let $A\colon\XX\to 2^{\XX^*}$ and $B\colon\XX\to 2^{\XX^*}$
be maximally monotone, let $f\in\Gamma_0(\XX)$ be 
essentially smooth, and let $D_f$ be the Bregman
distance associated with $f$. Set $C=(\intdom f)\cap\dom A$ 
and $\mathscr{S}=(\intdom f)\cap\zer(A+B)$. Suppose that
$C\subset\intdom B$, $\mathscr{S}\neq\emp$, 
$B$ is single-valued on $\intdom B$, and there exist
$\delta_1\in\lzeroun$, $\delta_2\in [0,1]$, and 
$\kappa\in\RP$ such that
\begin{multline}
\label{e:1d}
(\forall x\in C)(\forall y\in C)(\forall z\in\mathscr{S})
(\forall y^*\in Ay)(\forall z^*\in Az)\\
\qquad\Pair{y-x}{By-Bz}\leq\kappa D_f(x,y)+
\Pair{y-z}{\delta_1(y^*-z^*)+\delta_2\big(By-Bz\big)}.
\end{multline}
The objective is to 
\begin{equation}
\label{e:prob1}
\text{find}\;\:x\in\intdom f\;\:\text{such that}\;\:0\in Ax+Bx.
\end{equation}
\end{problem}

The central problem \eqref{e:prob1} has extensive connections with
various areas of mathematics and its applications. In Hilbert
spaces, if $B$ is cocoercive, a standard method
for solving \eqref{e:prob1} is the forward-backward algorithm,
which operates with the update 
$x_{n+1}=(\Id+\gamma A)^{-1}(x_n-\gamma Bx_n)$
\cite{Merc79}. This iteration is not applicable beyond
Hilbert spaces since $A$ maps to $\XX^*\neq\XX$.
In addition, there has been a significant body of
work (see, e.g., \cite{Baus17,Sico03,Baus18,Cens97,Joca16,%
Save18,Nguy17,Orti20,Save17})
showing the benefits of replacing standard distances by 
Bregman distances, even in Euclidean spaces.
Given a sequence $(\gamma_n)_{n\in\NN}$ in $\RPP$ and 
a suitable sequence of differentiable convex functions 
$(f_n)_{n\in\NN}$, we propose to solve \eqref{e:prob1} via
the iterative scheme
\begin{equation}
\label{e:2}
(\forall n\in\NN)\quad
x_{n+1}=\big(\nabla f_n+\gamma_nA\big)^{-1}\big(\nabla
f_n(x_n)-\gamma_nBx_n\big),
\end{equation}
which consists of first applying a forward (explicit) step involving
$B$ and then a backward (implicit) step involving $A$.
Let us note that the convergence of such an iterative process has
not yet been established, even in finite-dimensional spaces with a
single function $f_n=f$ and constant parameters
$\gamma_n=\gamma$. Furthermore, the 
novel scheme \eqref{e:2} will be shown to unify and extend 
several iterative methods which have thus far not been brought
together:
\begin{itemize}
\item
The Bregman monotone proximal point algorithm 
\begin{equation}
\label{e:5}
(\forall n\in\NN)\quad
x_{n+1}=\big(\nabla f+\gamma_nA\big)^{-1}\big(\nabla f(x_n)\big)
\end{equation}
of \cite{Sico03} for finding a zero of $A$ in $\intdom f$, where
$f$ is a Legendre function.
\item
The variable metric forward-backward splitting method
\begin{equation}
\label{e:4}
(\forall n\in\NN)\quad
x_{n+1}=\big(U_n+\gamma_nA\big)^{-1}\big(U_nx_n-\gamma_nBx_n\big)
\end{equation}
of \cite{Opti14} for finding a zero of $A+B$ in a Hilbert space,
where $(U_n)_{n\in\NN}$ is a sequence of strongly positive
self-adjoint bounded linear operators.
\item
The splitting method 
\begin{equation}
\label{e:7}
(\forall n\in\NN)\quad
x_{n+1}=\big(\nabla f_n+\gamma_n\partial\varphi\big)^{-1}\big(\nabla
f_n(x_n)-\gamma_n\nabla\psi(x_n)\big)
\end{equation}
of \cite{Nguy17} for finding a minimizer of the sum of the convex 
functions $\varphi$ and $\psi$ in $\intdom f$.
\item
The Renaud--Cohen algorithm
\begin{equation}
\label{e:8}
(\forall n\in\NN)\quad
x_{n+1}=\big(\nabla f+\gamma A\big)^{-1}\big(\nabla
f(x_n)-\gamma Bx_n\big)
\end{equation}
of \cite{Rena97} for finding a zero of $A+B$ in a Hilbert space,
where $f$ is real-valued and strongly convex.
\end{itemize}
Problems which cannot be solved by algorithms
\eqref{e:5}--\eqref{e:8} will be presented in
Example~\ref{ex:56} as well as in Sections~\ref{sec:32} and
\ref{sec:quang}.
New results on the minimization setting will be presented in
Section~\ref{sec:mini}.

The goal of the present paper is to investigate the asymptotic
behavior of \eqref{e:2} under mild conditions on $A$, $B$, and 
$(f_n)_{n\in\NN}$. Let us note that the convergence
proof techniques used in the above four frameworks do not extend
to \eqref{e:2}. For instance, the tools of
\cite{Nguy17} rely heavily on functional inequalities involving
$\varphi$ and $\psi$. On the other hand, the approach of
\cite{Opti14} exploits specific properties of quadratic kernels
in Hilbert spaces, while \cite{Sico03} relies on Bregman
monotonicity properties of the iterates that will no longer hold
in the presence of $B$. Finally, the proofs of \cite{Rena97} 
depend on the strong
convexity of $f$, the underlying Hilbertian structure, and the fact
that the updating equation is governed by a fixed operator. 
Our analysis will not only capture these frameworks but also provide
new methods to solve problems beyond their reach. It hinges on the
theory of Legendre functions and the following new condition, which
will be seen to cover in particular various 
properties such as the cocoercivity assumption used in the standard
forward-backward method in Hilbert spaces \cite{Livre1,Merc79}, as
well as the seemingly unrelated assumptions used in 
\cite{Sico03,Opti14,Nguy17,Rena97} to study
\eqref{e:5}--\eqref{e:8}.

The main result on the convergence of \eqref{e:2} is established 
in Section~\ref{sec:2} for the general scenario described in
Problem~\ref{prob:1}. Section~\ref{sec:3} is dedicated to
special cases and applications. In the context of minimization 
problems, convergence rates on the worst behavior of the method 
are obtained.

\section{Main results}
\label{sec:2}

\subsection{Notation and definitions}
\label{sec:21}
The norm of $\XX$ is denoted by $\|\mute\|$ and the canonical 
pairing between $\XX$ and $\XX^*$ by 
$\pair{{\mkern 1mu\cdot\mkern 2mu}}{{\cdot\mkern 1mu}}$. If $\XX$ 
is Hilbertian, its scalar product is denoted by
$\scal{{\mkern 1mu\cdot}}{{\cdot\mkern 1mu}}$.
The symbols $\weakly$ and $\to$ denote
respectively weak and strong convergence. The set of weak 
sequential cluster points of a sequence 
$(x_n)_{n\in\NN}$ in $\XX$ is denoted by $\WC(x_n)_{n\in\NN}$. 

Let $M\colon\XX\to 2^{\XX^*}$ be a set-valued operator. Then 
$\gra M=\menge{(x,x^*)\in\XX\times\XX^*}{x^*\in Mx}$ is the graph 
of $M$, $\dom M=\menge{x\in\XX}{Mx\neq\emp}$ the domain of $M$, 
$\ran M=\menge{x^*\in\XX^*}{(\exi x\in\XX)\,x^*\in Mx}$
the range of $M$, and $\zer M=\menge{x\in\XX}{0\in Mx}$
the set of zeros of $M$. Moreover, $M$ is monotone if 
\begin{equation}
\big(\forall (x_1,x_1^*)\in\gra M\big)
\big(\forall (x_2,x_2^*)\in\gra M\big)\quad
\pair{x_1-x_2}{x_1^*-x_2^*}\geq 0,
\end{equation}
and maximally monotone if, furthermore, there exists no monotone 
operator from $\XX$ to $2^{\XX^*}$ the graph of which properly 
contains $\gra M$. 

A function $f\colon\XX\to\RX$ is coercive if 
$\lim_{\|x\|\to+\infty}f(x)=+\infty$ and supercoercive if 
$\lim_{\|x\|\to+\infty}f(x)/\|x\|=+\infty$.
$\Gamma_0(\XX)$ is the class of lower semicontinuous convex
functions $f\colon\XX\to\RX$ such that 
$\dom f=\menge{x\in\XX}{f(x)<\pinf}\neq\emp$. 
Now let $f\in\Gamma_0(\XX)$. The conjugate of $f$ 
is the function $f^*\in\Gamma_0(\XX^*)$ defined by
$f^*\colon\XX^*\to\RX\colon x^*\mapsto
\sup_{x\in\XX}(\pair{x}{x^*}-f(x))$, and the subdifferential of $f$ 
is the maximally monotone operator
\begin{equation}
\label{e:subdiff}
\partial f\colon\XX\to 2^{\XX^*}\colon x\mapsto
\menge{x^*\in\XX^*}{(\forall y\in\XX)\,
\pair{y-x}{x^*}+f(x)\leq f(y)}.  
\end{equation}
In addition, $f$ is a Legendre function if it is 
essentially smooth in the sense that $\partial f$ is 
both locally bounded and single-valued on its
domain, and essentially strictly convex in the sense
that $\partial f^*$ is locally bounded on its domain and 
$f$ is strictly convex on every convex subset of $\dom\partial f$
\cite{Ccm01}.
Suppose that $f$ is G\^ateaux differentiable on 
$\IDD\neq\emp$. The Bregman distance associated with $f$ is 
\begin{equation}
\label{e:Bdist}
\begin{aligned}
D_f\colon\XX\times\XX&\to\,[0,\pinf]\\
(x,y)&\mapsto 
\begin{cases}
f(x)-f(y)-\pair{x-y}{\nabla f(y)},&\text{if}\;\;y\in\IDD;\\
\pinf,&\text{otherwise}.
\end{cases}
\end{aligned}
\end{equation}
Given $\alpha\in\RPP$, we define
\begin{equation}
\label{e:Ca}
\BC_\alpha(f)=\menge{g\in\Gamma_0(\XX)}
{\dom g=\dom f,\:g\:\text{is G\^ateaux differentiable on}\:
\intdom f,\:D_g\geq\alpha D_f}.
\end{equation}

\subsection{On condition~\eqref{e:1d}}
The following proposition provides several key illustrations of
the pertinence of \eqref{e:1d} in terms of capturing concrete 
scenarios.

\begin{proposition}
\label{p:7}
Consider the setting of Problem~\ref{prob:1}. Then \eqref{e:1d}
holds in each of the following cases:
\begin{enumerate}
\item
\label{p:7i-}
$\delta_1\in\lzeroun$, $\delta_2=1$, and
$(\forall x\in C)(\forall y\in C)(\forall z\in\mathscr{S})$
$\pair{z-x}{By-Bz}\leq\kappa D_f(x,y)$.
\item
\label{p:7i}
$\delta_1=0$, $\delta_2=1$, and $B=\partial\psi$,
where $\psi\in\Gamma_0(\XX)$ satisfies 
\begin{equation}
\label{e:3p}
(\forall x\in C)(\forall y\in C)(\forall z\in\mathscr{S})\quad
D_\psi(x,y)\leq\kappa D_f(x,y)+D_\psi(x,z)+ D_\psi(z,y).
\end{equation}
\item
\label{p:7v}
$\delta_1=0$, $\delta_2=1$,
and there exists $\psi\in\Gamma_0(\XX)$ such that
$B=\partial\psi$ and $(\forall x\in C)(\forall y\in C)$
$D_\psi(x,y)\leq\kappa D_f(x,y)$.
\item
\label{p:7iii}
$\dom B=\XX$, there exists $\beta\in\RPP$ such that
\begin{equation}
\label{e:0323}
\big(\forall(x,x^*)\in\gra(A+B)\big)\big(\forall(y,y^*)
\in\gra(A+B)\big)\quad\pair{x-y}{x^*-y^*}\geq\beta\|Bx-By\|^2,
\end{equation}
$f$ is Fr\'echet differentiable on $\XX$,
$\nabla f$ is $\alpha$-strongly monotone on $\dom A$
for some $\alpha\in\RPP$,
$\varepsilon\in\left]0,2\beta\right[$,
$\kappa=1/(\alpha(2\beta-\varepsilon))$, and
$\delta_1=\delta_2=(2\beta-\varepsilon)/(2\beta)$.
\item
\label{p:7iv}
$A+B$ is strongly monotone with constant
$\mu\in\RPP$, $B$ is Lipschitzian on $\dom B=\XX$
with constant $\nu\in\RPP$,
$f$ is Fr\'echet differentiable on $\XX$,
$\nabla f$ is $\alpha$-strongly monotone on $\dom A$
for some $\alpha\in\RPP$,
$\varepsilon\in\left]0,2\mu/\nu^2\right[$,
$\kappa=\nu^2/(\alpha(2\mu-\varepsilon\nu^2))$,
and $\delta_1=\delta_2=(2\mu-\varepsilon\nu^2)/(2\mu)$.
\item
\label{p:7ii}
$\dom B=\XX$, $\beta\in\RPP$,
$f$ is Fr\'echet differentiable on $\XX$,
$\nabla f$ is $\alpha$-strongly monotone on $\dom A$
for some $\alpha\in\RPP$,
$\varepsilon\in\left]0,2\beta\right[$,
$\kappa=1/(\alpha(2\beta-\varepsilon))$, $\delta_1=0$,
$\delta_2=(2\beta-\varepsilon)/(2\beta)$,
and one of the following is satisfied:
\begin{enumerate}[label={\rm[\alph*]}]
\item
\label{p:7iia}
$B$ is $\beta$-cocoercive, i.e.,
\begin{equation}
\label{e:9321}
(\forall x\in\XX)(\forall y\in\XX)\quad
\pair{x-y}{Bx-By}\geq\beta\|Bx-By\|^2.
\end{equation}
\item
\label{p:7iib}
$B$ is $\nu$-Lipschitzian for some $\nu\in\RPP$,
and angle bounded with constant $1/(4\beta\nu)$, i.e.,
\begin{equation}
(\forall x\in\XX)(\forall y\in\XX)(\forall z\in\XX)\quad
\pair{y-z}{Bz-Bx}\leq\frac{1}{4\beta\nu}
\pair{x-y}{Bx-By}.
\end{equation}
\item
\label{p:7iic}
$B$ is $(1/\beta)$-Lipschitzian and
there exists $\psi\in\Gamma_0(\XX)$ such that $B=\nabla\psi$.
\end{enumerate}
\end{enumerate}
\end{proposition}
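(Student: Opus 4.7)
The plan is to verify~\eqref{e:1d} in each case by combining the identity
\begin{equation*}
\pair{y-x}{By-Bz} = \pair{y-z}{By-Bz} + \pair{z-x}{By-Bz}
\end{equation*}
with case-specific structural properties of $B$ and, where relevant, the monotonicity of $A$, which provides $\pair{y-z}{y^*-z^*}\geq 0$.

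In case~\ref{p:7i-}, the hypothesis gives $\pair{z-x}{By-Bz}\leq\kappa D_f(x,y)$ directly, so the identity yields $\pair{y-x}{By-Bz}\leq\kappa D_f(x,y)+\pair{y-z}{By-Bz}$, to which I would add the nonnegative quantity $\delta_1\pair{y-z}{y^*-z^*}$ to recover~\eqref{e:1d}. Case~\ref{p:7i} reduces to~\ref{p:7i-}: when $By\in\partial\psi(y)$ and $Bz\in\partial\psi(z)$, direct expansion yields the three-point Bregman identity $\pair{z-x}{By-Bz}=D_\psi(x,y)-D_\psi(x,z)-D_\psi(z,y)$, and~\eqref{e:3p} then supplies the required bound on $\pair{z-x}{By-Bz}$. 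Case~\ref{p:7v} is an immediate specialization of~\ref{p:7i}, since $D_\psi\geq 0$ turns the pointwise bound $D_\psi(x,y)\leq\kappa D_f(x,y)$ into~\eqref{e:3p}.

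The remaining cases rest on a common quantitative estimate. First, Cauchy--Schwarz and Young's inequality give, for every $\lambda>0$,
\begin{equation*}
\pair{y-x}{By-Bz}\leq\frac{1}{2\lambda}\|y-x\|^2+\frac{\lambda}{2}\|By-Bz\|^2,
\end{equation*}
and the $\alpha$-strong monotonicity of $\nabla f$ on $\dom A$ yields $\|y-x\|^2\leq(2/\alpha)D_f(x,y)$; the choice $\lambda=2\beta-\varepsilon$ produces exactly the claimed $\kappa=1/(\alpha(2\beta-\varepsilon))$. In case~\ref{p:7iii}, applying~\eqref{e:0323} to the pairs $(y,y^*+By),(z,z^*+Bz)\in\gra(A+B)$ supplies $\|By-Bz\|^2\leq(1/\beta)\pair{y-z}{(y^*-z^*)+(By-Bz)}$, and multiplying by $(2\beta-\varepsilon)/2$ recovers the required linear term with $\delta_1=\delta_2=(2\beta-\varepsilon)/(2\beta)$. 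Case~\ref{p:7iv} reduces to~\ref{p:7iii} via the observation that $\mu$-strong monotonicity of $A+B$ combined with $\nu$-Lipschitzness of $B$ forces~\eqref{e:0323} with $\beta=\mu/\nu^2$, and one then checks that the parameters $\varepsilon,\kappa,\delta_1,\delta_2$ of~\ref{p:7iv} are exactly those of~\ref{p:7iii} under this substitution.

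Case~\ref{p:7ii} follows the same template, but with the $A$-monotonicity term dropped, so it suffices to establish the $\beta$-cocoercivity inequality $\|By-Bz\|^2\leq(1/\beta)\pair{y-z}{By-Bz}$. This is the hypothesis in~\ref{p:7iia}; in~\ref{p:7iic} it is the Baillon--Haddad theorem applied to the convex function $\psi$ with $(1/\beta)$-Lipschitz gradient; and in~\ref{p:7iib} it is the classical fact that a $\nu$-Lipschitz operator which is angle-bounded with constant $1/(4\beta\nu)$ is $\beta$-cocoercive. I expect~\ref{p:7iib} to be the main obstacle, because unraveling the ternary angle bound into the binary cocoercivity inequality requires a delicate choice of auxiliary third point in the style of the classical Bruck--Baillon arguments.
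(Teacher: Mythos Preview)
Your plan is correct and mirrors the paper's proof almost exactly: the same splitting of $\pair{y-x}{By-Bz}$, the same three-point Bregman identity for~\ref{p:7i}, the same Young-inequality bound with $\lambda=2\beta-\varepsilon$ in~\ref{p:7iii}--\ref{p:7ii}, and the same chain of reductions \ref{p:7v}$\Rightarrow$\ref{p:7i}, \ref{p:7iv}$\Rightarrow$\ref{p:7iii}, and \ref{p:7iib}/\ref{p:7iic}$\Rightarrow$\ref{p:7iia}. The only points worth flagging are that deriving $D_f(x,y)\geq(\alpha/2)\|x-y\|^2$ from strong monotonicity of $\nabla f$ on $\dom A$ requires the segment $[x,y]$ to lie in a convex set where the strong monotonicity holds, which the paper secures by extending to $\cdom A$ via continuity of $\nabla f$ (guaranteed by Fr\'echet differentiability), and that your anticipated obstacle in~\ref{p:7iib} is not one---the paper simply cites the Baillon--Haddad result that a Lipschitz angle-bounded operator with those constants is $\beta$-cocoercive.
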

\begin{proof}
\ref{p:7i-}: Let $x\in C$, $y\in C$, and $z\in\mathscr{S}$. Then 
$\pair{y-x}{By-Bz}
=\pair{z-x}{By-Bz}+\pair{y-z}{By-Bz}
\leq\kappa D_f(x,y)+\pair{y-z}{\delta_2(By-Bz)}$.
In view of the monotonicity of $A$, we obtain \eqref{e:1d}.

\ref{p:7i}$\Rightarrow$\ref{p:7i-}:
In the light of \cite[Proposition~4.1.5 and
Corollary~4.2.5]{Borw10}, $\psi$ is G\^ateaux differentiable
on $\intdom\psi$ and $B=\nabla\psi$ on
$\intdom\psi=\intdom B\supset C$.
Hence, we derive from \eqref{e:3p}, \eqref{e:Bdist},
and \cite[Proposition~2.3(ii)]{Sico03} that
\begin{equation}
(\forall x\in C)(\forall y\in C)(\forall z\in\mathscr{S})\quad
\kappa D_f(x,y)
\geq D_\psi(x,y)-D_\psi(x,z)-D_\psi(z,y)=\pair{z-x}{By-Bz}.
\end{equation}

\ref{p:7v}$\Rightarrow$\ref{p:7i}: Clear.

\ref{p:7iii}:
It results from \cite[Theorem~4.2.10]{Borw10} that
$\nabla f$ is continuous. Thus, using the strong monotonicity
of $\nabla f$ on $\dom A$, we obtain
\begin{equation}
\label{e:2085}
(\forall x\in\cdom A)(\forall y\in\cdom A)\quad
\pair{x-y}{\nabla f(x)-\nabla f(y)}\geq\alpha\|x-y\|^2.
\end{equation}
Given $x$ and $y$ in $\cdom A$, define
$\phi\colon\RR\to\RR\colon t\mapsto f(y+t(x-y))$, and observe that,
since $\cdom A$ is convex \cite[Theorem~3.11.12]{Zali02},
$[x,y]\subset\cdom A$ and therefore \eqref{e:2085} yields
\begin{align}
D_f(x,y)
&=\int_0^1\phi'(t)dt-\pair{x-y}{\nabla f(y)}
\nonumber\\
&=\int_0^1\Pair{x-y}{\nabla f(y+t(x-y))-\nabla f(y)}dt
\nonumber\\
&\geq\int_0^1t\alpha\|x-y\|^2dt
\nonumber\\
&=\frac{\alpha}{2}\|x-y\|^2.
\label{e:4730}
\end{align}
In turn, using \eqref{e:0323} and \eqref{e:4730}, we deduce that
\begin{align}
&\hskip -44mm(\forall x\in C)\big(\forall(y,y^*)\in\gra A\big)
\big(\forall(z,z^*)\in\gra A\big)
\nonumber\\
\hskip 12mm\pair{y-x}{By-Bz}
&\leq\Bigg\|\dfrac{y-x}{\sqrt{2\beta-\varepsilon}}\Bigg\|\,
\big\|\sqrt{2\beta-\varepsilon}(By-Bz)\big\|
\nonumber\\
&\leq\frac{\|y-x\|^2}{2(2\beta-\varepsilon)}
+\frac{2\beta-\varepsilon}{2}\|By-Bz\|^2
\label{e:39fh}\\
&\leq\kappa D_f(x,y)+\Pair{y-z}{\delta_1(y^*-z^*)
+\delta_2(By-Bz)}.
\end{align}

\ref{p:7iv}$\Rightarrow$\ref{p:7iii}:
Set $\beta=\mu/\nu^2$. Then
\begin{multline}
\big(\forall(x,x^*)\in\gra(A+B)\big)
\big(\forall(y,y^*)\in\gra(A+B)\big)\\
\pair{x-y}{x^*-y^*}\geq\mu\|x-y\|^2\geq\beta\|Bx-By\|^2.
\end{multline}

\ref{p:7ii}: We consider each case separately.

\ref{p:7iia}:
By arguing as in \eqref{e:4730}, we obtain
$(\forall x\in\dom A)(\forall y\in\dom A)$
$D_f(x,y)\geq(\alpha/2)\|x-y\|^2$.
It thus follows from \eqref{e:39fh} and \eqref{e:9321} that
\begin{align}
&\hskip -44mm(\forall x\in C)\big(\forall(y,y^*)\in\gra A\big)
\big(\forall(z,z^*)\in\gra A\big)
\nonumber\\
\hskip 22mm\pair{y-x}{By-Bz}
&\leq\frac{\|y-x\|^2}{2(2\beta-\varepsilon)}
+\frac{2\beta-\varepsilon}{2}\|By-Bz\|^2
\nonumber\\
&\leq\kappa D_f(x,y)+\Pair{y-z}{\delta_2(By-Bz)}.
\end{align}

\ref{p:7iib}$\Rightarrow$\ref{p:7iia}:
We derive from \cite[Proposition~4]{Bail77} that
$B$ is cocoercive with constant $\beta$.

\ref{p:7iic}$\Rightarrow$\ref{p:7iia}:
This follows from \cite[Corollaire~10]{Bail77}.
\end{proof}

\begin{remark}
\label{r:q}
Condition~\ref{p:7iii} in Proposition~\ref{p:7} first appeared in
\cite{Rena97} and does not seem to have gotten much notice in the
literature. The cocoercivity condition \ref{p:7ii}\ref{p:7iia}
was first used in \cite{Merc79} to prove
the weak convergence of the classical
forward-backward method in Hilbert spaces. 
Finally, in reflexive Banach space minimization
problems, \ref{p:7v} appears in \cite{Nguy17}; see also 
\cite{Baus17} for the Euclidean case. 
\end{remark}

\begin{remark}
\label{r:3}
Condition~\ref{p:7v} is satisfied in particular
when $\XX$ is a Hilbert space,
$f=\|\mute\|^2/2$, $\dom\psi=\XX$, and $\nabla\psi$ is Lipschitzian
\cite[Theorem~18.15]{Livre1}, in which case it is known as the
``descent lemma.''
Condition~\ref{p:7i} can be viewed as an extension 
of this standard descent lemma involving triples $(x,y,z)$ and an
arbitrary Bregman distance $D_f$ in reflexive Banach spaces. 
Let us underline that \ref{p:7i} is more general than \ref{p:7v}. 
Indeed, consider the setting of Problem~\ref{prob:1} with the 
following additional assumptions: $\XX$ is a Hilbert space,
$0\in\intdom f$,
$A$ is the normal cone operator of some self-dual cone $K$,
and there exists a G\^ateaux differentiable convex function
$\psi\colon\XX\to\RR$ such that
\begin{equation}
\label{e:5920}
B=\nabla\psi,\quad\Argmin\psi=\{0\},\quad\text{and}\quad
\nabla\psi(K)\subset K.
\end{equation}
Then $C=(\intdom f)\cap\dom A\subset K$ and $\mathscr{S}=\{0\}$.
Further, for every $x\in C$ and every $y\in C$, \eqref{e:5920}
yields $D_\psi(x,y)-D_\psi(x,0)-D_\psi(0,y)
=\scal{-x}{\nabla\psi(y)-\nabla\psi(0)}
=\scal{-x}{\nabla\psi(y)}\leq 0\leq D_f(x,y)$.
Therefore, \eqref{e:3p} is satisfied. On the other hand, 
\ref{p:7v} does not hold in general. For instance, take
$\XX=\RR$, $K=\RP$, $f=|\mute|^2/2$, and $\psi=|\mute|^{3/2}$.
\end{remark}

\subsection{Forward-backward splitting for monotone inclusions}

The formal setting of the proposed Bregman forward-backward 
splitting method is as follows.

\begin{algorithm}
\label{a:1}
Consider the setting of Problem~\ref{prob:1}. 
Let $\alpha\in\RPP$, let $(\gamma_n)_{n\in\NN}$ be in $\RPP$,
and let $(f_n)_{n\in\NN}$ be in $\BC_\alpha(f)$.
Suppose that the following hold:
\begin{enumerate}[label={\rm[\alph*]}]
\item
\label{a:1a}
$\inf_{n\in\NN}\gamma_n>0$,
$\sup_{n\in\NN}(\kappa\gamma_n)\leq\alpha$, and 
$\sup_{n\in\NN}(\delta_1\gamma_{n+1}/\gamma_n)<1$.
\item
\label{a:1b}
There exists a summable sequence $(\eta_n)_{n\in\NN}$ in $\RP$
such that $(\forall n\in\NN)$ $D_{f_{n+1}}\leq (1+\eta_n)D_{f_n}$.
\item
\label{a:1c}
For every $n\in\NN$, $\nabla f_n$ is strictly monotone on $C$ and 
$(\nabla f_n-\gamma_n{B})(C)\subset\ran(\nabla f_n+\gamma_nA)$.
\end{enumerate}
Take $x_0\in C$ and set $(\forall n\in\NN)$
$x_{n+1}=(\nabla f_n+\gamma_nA)^{-1}(\nabla f_n(x_n)-\gamma_nBx_n)$.
\end{algorithm}

Let us establish basic asymptotic properties of 
Algorithm~\ref{a:1}, starting with the fact that its
viability domain is $C$.

\begin{proposition}
\label{p:1}
Let $(x_n)_{n\in\NN}$ be a sequence generated by Algorithm~\ref{a:1}
and let $z\in\mathscr{S}$. Then $(x_n)_{n\in\NN}$ is a
well-defined sequence in $C$ and the following hold:
\begin{enumerate}
\item
\label{p:1i}
$(D_{f_n}(z,x_n))_{n\in\NN}$ converges.
\item
\label{p:1iii}
$\sum_{n\in\NN}(1-\kappa\gamma_n/\alpha)D_{f_n}(x_{n+1},x_n)<\pinf$
and $\sum_{n\in\NN}(1-\kappa\gamma_n/\alpha)D_f(x_{n+1},x_n)<\pinf$.
\item
\label{p:1v}
$\sum_{n\in\NN}\pair{x_{n+1}-z}{\gamma_{n}^{-1}(\nabla f_n(x_n)
-\nabla f_n(x_{n+1}))-Bx_n+Bz}<\pinf$.
\item
\label{p:1vi}
$\sum_{n\in\NN}(1-\delta_2)\pair{x_n-z}{{B}x_n-{B}z}<\pinf$.
\item
\label{p:1vii}
Suppose that one of the following is satisfied:
\begin{enumerate}[label={\rm[\alph*]}]
\item
\label{p:1viia}
$C$ is bounded.
\item
\label{p:1viib}
$f$ is supercoercive.
\item
\label{p:1viic}
$f$ is uniformly convex.
\item
\label{p:1viic-}
$f$ is essentially strictly convex with $\dom f^*$ open and 
$\nabla f^*$ weakly sequentially continuous.
\item
\label{p:1viid}
$\XX$ is finite-dimensional and $\dom f^*$ is open.
\item
\label{p:1viie}
$f$ is essentially strictly convex and $\displaystyle
\rho=\inf_{\substack{x\in\intdom f\\ y\in\intdom f\\ x\neq y}}
\;\frac{D_{f}(x,y)}{D_{f}(y,x)} \in\RPP$.
\end{enumerate}
Then $(x_n)_{n\in\NN}$ is bounded.
\end{enumerate}
\end{proposition}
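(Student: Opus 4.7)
The plan is to proceed in four stages: establishing well-definedness, deriving a master descent inequality, running a quasi-Fej\'er argument on a suitable Lyapunov functional, and finally handling boundedness of the iterates.

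First, I will show by induction that $x_n\in C$. Given $x_n\in C$, the range condition~\ref{a:1c} provides $x_{n+1}$ with $\nabla f_n(x_n)-\gamma_nBx_n\in\nabla f_n(x_{n+1})+\gamma_nAx_{n+1}$, which must lie in $\dom(\nabla f_n+\gamma_nA)=C$; uniqueness follows from strict monotonicity of $\nabla f_n$ on $C$ combined with monotonicity of $A$. Next, setting $a_{n+1}^*:=\gamma_n^{-1}(\nabla f_n(x_n)-\nabla f_n(x_{n+1}))-Bx_n\in Ax_{n+1}$ and $\chi_{n+1}:=\gamma_n\pair{x_{n+1}-z}{a_{n+1}^*+Bz}$, the monotonicity of $A$ (with $-Bz\in Az$) gives $\chi_{n+1}\geq 0$. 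I will then apply the three-point identity for $D_{f_n}$ at $(z,x_{n+1},x_n)$, substitute $\nabla f_n(x_{n+1})-\nabla f_n(x_n)$ from the iteration, and decompose $\pair{x_{n+1}-z}{Bx_n-Bz}=\pair{x_n-z}{Bx_n-Bz}-\pair{x_n-x_{n+1}}{Bx_n-Bz}$. The last cross-term is bounded via~\eqref{e:1d} with $x=x_{n+1}$, $y=x_n$, $y^*=a_n^*$, $z^*=-Bz$; absorbing $\kappa\gamma_nD_f(x_{n+1},x_n)$ into $(\kappa\gamma_n/\alpha)D_{f_n}(x_{n+1},x_n)$ via $D_{f_n}\geq\alpha D_f$ yields the master estimate
\begin{equation*}
D_{f_n}(z,x_{n+1})+d_n+e_n+\chi_{n+1}\leq D_{f_n}(z,x_n)+\rho_n\chi_n,
\end{equation*}
with $d_n:=(1-\kappa\gamma_n/\alpha)D_{f_n}(x_{n+1},x_n)\geq 0$, $e_n:=\gamma_n(1-\delta_2)\pair{x_n-z}{Bx_n-Bz}\geq 0$, and $\rho_n:=\delta_1\gamma_n/\gamma_{n-1}$ uniformly strictly below $1$ by~\ref{a:1a}.

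Introducing the Lyapunov functional $\Psi_n:=D_{f_n}(z,x_n)+\chi_n$ and using $\rho_n\leq 1$ together with~\ref{a:1b}, the master estimate becomes $\Psi_{n+1}\leq(1+\eta_n)(\Psi_n-d_n-e_n)$. A Robbins--Siegmund-type lemma (exploiting $\sum_n\eta_n<\infty$) then delivers convergence of $(\Psi_n)$ and $\sum_n(d_n+e_n)<\infty$, immediately yielding~\ref{p:1iii} (the $D_f$ bound via $D_{f_n}\geq\alpha D_f$) and~\ref{p:1vi} (via $\inf_n\gamma_n>0$). Rearranging the master estimate as $\chi_{n+1}-\rho_n\chi_n\leq D_{f_n}(z,x_n)-D_{f_n}(z,x_{n+1})$ and summing, the uniform bound $\sup_n\rho_n<1$ combined with a telescope on the right, whose defect is controlled by $(\sup_k\Psi_k)\sum_n\eta_n<\infty$ thanks to $D_{f_n}(z,x_{n+1})\geq D_{f_{n+1}}(z,x_{n+1})/(1+\eta_n)$, yields $\sum_n\chi_n<\infty$. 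Hence $\sum_n\gamma_n^{-1}\chi_{n+1}<\infty$, proving~\ref{p:1v}. Moreover, $\chi_n\to 0$, so convergence of $(\Psi_n)$ forces convergence of $(D_{f_n}(z,x_n))$, giving~\ref{p:1i}.

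For~\ref{p:1vii}, the bound $D_f(z,x_n)\leq D_{f_n}(z,x_n)/\alpha$ places $(x_n)$ in a sublevel set of $D_f(z,\cdot)$. Case~\ref{p:1viia} is immediate from $x_n\in C$; the remaining cases invoke standard properties of Bregman distances associated with (super)coercive, uniformly convex, or Legendre functions. Specifically, supercoercivity in~\ref{p:1viib} makes $D_f(z,\cdot)$ coercive; uniform convexity in~\ref{p:1viic} provides a modulus bound on $D_f(z,\cdot)$; cases~\ref{p:1viic-} and~\ref{p:1viid} rest on weak sequential continuity of $\nabla f^*$ on its open domain (automatic in finite dimension); finally,~\ref{p:1viie} uses the ratio $\rho$ to trade $D_f(z,x_n)$ for $D_f(x_n,z)$ before invoking essential strict convexity of $f$.

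The main obstacle, as I see it, is the identification of the correct Lyapunov functional and the precise choice $y^*=a_n^*$ in~\eqref{e:1d}: without coupling to the residual of the previous iteration, the $\delta_1$-term would not fold into a quasi-Fej\'er recursion, and the damping $\rho_n<1$ supplied by~\ref{a:1a} could not be harnessed. The subsequent telescoping of $\sum_n\chi_n$ under a varying metric, with careful accounting of the $(1+\eta_n)$-factors, is the secondary delicate point.
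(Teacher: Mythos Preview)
Your proposal is correct and follows the same backbone as the paper: the three-point identity for $D_{f_n}$, condition~\eqref{e:1d} applied with $x=x_{n+1}$, $y=x_n$, $y^*\in Ax_n$, $z^*=-Bz$, and a Robbins--Siegmund/quasi-Fej\'er argument. The one organizational difference is the choice of Lyapunov functional. The paper takes
\[
\Delta_n=D_{f_n}(z,x_n)+\delta_1\gamma_n\pair{x_n-z}{x_n^*+Bz}
\]
and, exploiting $\delta_1\gamma_{n+1}\leq(1-\varepsilon)\gamma_n$, folds an $\varepsilon$-fraction of $\chi_{n+1}$ directly into the summable residual $\theta_n$; the summability in~\ref{p:1v} then drops out of the Robbins--Siegmund lemma with no extra work. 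Your choice $\Psi_n=D_{f_n}(z,x_n)+\chi_n$ is simpler to write down but forces you to recover $\sum_n\chi_n<\pinf$ afterwards via the telescoping argument you sketch (which is correct, once the $n=0$ boundary term is handled by fixing an arbitrary $a_0^*\in Ax_0$, as the paper also does). Both routes are valid; the paper's packaging is slightly more economical, while yours makes the role of the damping ratio $\rho_n$ more transparent. Your treatment of~\ref{p:1vii} matches the paper's case analysis, with the minor caveat that in case~\ref{p:1viid} the paper invokes a direct lemma rather than passing through $\nabla f^*$ (which need not exist, since $f$ is not assumed essentially strictly convex there).
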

\begin{proof}
Take $n\in\NN$, and suppose that
$(y^*,y_1)$ and $(y^*,y_2)$ belong to
$\gra(\nabla f_n+\gamma_n A)^{-1}$.
Then $y^*\in(\nabla f_n+\gamma_n A)y_1$ and
$y^*\in(\nabla f_n+\gamma_n A)y_2$. However,
by virtue of condition~\ref{a:1c} in Algorithm~\ref{a:1},
$\nabla f_n+\gamma_n A$ is strictly monotone. Therefore,
since $\pair{y_1-y_2}{y^*-y^*}=0$, we infer that $y_1=y_2$.
Hence
\begin{equation}
\label{e:2751}
(\nabla f_n+\gamma_nA)^{-1}\;\text{is single-valued on}\:
\dom(\nabla f_n+\gamma_nA)^{-1}=\ran(\nabla
f_n+\gamma_nA).
\end{equation}
Moreover, it follows from \cite[Proposition~4.2.2]{Borw10} and 
\eqref{e:Ca} that
\begin{equation}
\label{e:0841}
\ran(\nabla f_n+\gamma_nA)^{-1}=\dom\nabla f_n\cap\dom A
=(\intdom f_n)\cap\dom A=C.
\end{equation}
Next, we observe that, since $x_0\in C\subset\intdom B$, 
$\nabla f_0(x_0)-\gamma_0Bx_0$ is a singleton. 
Furthermore, in view of condition~\ref{a:1c} in Algorithm~\ref{a:1},
$\nabla f_0(x_0)-\gamma_0Bx_0\in\ran(\nabla
f_0+\gamma_0A)$. We thus deduce from \eqref{e:2751} that
$x_1=(\nabla f_0+\gamma_0A)^{-1}(\nabla f_0(x_0)-\gamma_0Bx_0)$
is uniquely defined. In addition, \eqref{e:0841} yields
$x_1\in\ran(\nabla f_0+\gamma_0A)^{-1}=C$. The conclusion
that $(x_n)_{n\in\NN}$ is a well-defined sequence in $C$
follows by invoking these facts inductively.

\ref{p:1i}--\ref{p:1vi}:
Condition~\ref{a:1a} in Algorithm~\ref{a:1} entails that there 
exists $\varepsilon\in\zeroun$ such that
\begin{equation}
\label{e:7152}
\delta_1\gamma_{n+1}\leq(1-\varepsilon)\gamma_n.
\end{equation}
Now take $x_0^*\in Ax_0$ and set
\begin{equation}
\label{e:4650}
\begin{cases}
x_{n+1}^*=\gamma_n^{-1}\big(\nabla f_n(x_n)-\nabla
f_n(x_{n+1})\big)-Bx_n\\
\Delta_n=D_{f_n}(z,x_n)+\delta_1\gamma_n\pair{x_n-z}{x_n^*+Bz}\\
\theta_n=(1-\kappa\gamma_n/\alpha)D_{f_n}(x_{n+1},x_n)
\\
\qquad\;+\varepsilon\gamma_n\pair{x_{n+1}-z}{x_{n+1}^*+Bz}
+(1-\delta_2)\gamma_n\pair{x_n-z}{Bx_n-Bz}.
\end{cases}
\end{equation}
In view of \eqref{e:4650},
\begin{equation}
\label{e:1010}
(x_{n+1},x_{n+1}^*)\in\gra A.
\end{equation}
In turn, since $(z,-Bz)\in\gra A$ and $A$ is monotone,
\begin{equation}
\label{e:8832}
\pair{x_{n+1}-z}{x_{n+1}^*+Bz}\geq 0.
\end{equation}
Hence, invoking condition~\ref{a:1a} in Algorithm~\ref{a:1}
and the monotonicity of $B$, we obtain $\theta_n\geq 0$.
Next, since $z\in\intdom f=\intdom f_n$ by \eqref{e:Ca},
we derive from \eqref{e:4650} and
\cite[Proposition~2.3(ii)]{Sico03} that
\begin{align}
0&=\Pair{x_{n+1}-z}{\nabla f_n(x_n)-\nabla
f_n(x_{n+1})-\gamma_nBx_n-\gamma_nx_{n+1}^*}
\nonumber\\
&=\Pair{x_{n+1}-z}{\nabla f_n(x_n)-\nabla f_n(x_{n+1})}
+\gamma_n\pair{z-x_{n+1}}{Bx_n-Bz}-
\gamma_n\pair{x_{n+1}-z}{x_{n+1}^*+Bz}
\nonumber\\
&=D_{f_n}(z,x_n)-D_{f_n}(z,x_{n+1})-D_{f_n}(x_{n+1},x_n)
+\gamma_n\pair{z-x_{n+1}}{Bx_n-Bz}
\nonumber\\
&\quad\;
-\gamma_n\pair{x_{n+1}-z}{x_{n+1}^*+Bz}.
\label{e:2581}
\end{align}
Thus, since $(z,-Bz)\in\gra A$ and $f_n\in\BC_\alpha(f)$,
we infer from \eqref{e:7152}, \eqref{e:8832}, \eqref{e:1010},
and \eqref{e:1d} that
\begin{align}
&D_{f_n}(z,x_{n+1})+\delta_1\gamma_{n+1}
\pair{x_{n+1}-z}{x_{n+1}^*+Bz}
\nonumber\\
&\hskip 5mm
\leq D_{f_n}(z,x_{n+1})+\gamma_n\pair{x_{n+1}-z}{x_{n+1}^*+Bz}
-\varepsilon\gamma_n\pair{x_{n+1}-z}{x_{n+1}^*+Bz}
\nonumber\\
&\hskip 5mm
=D_{f_n}(z,x_n)-D_{f_n}(x_{n+1},x_n)+
\gamma_n\pair{z-x_{n+1}}{Bx_n-Bz}
-\varepsilon\gamma_n\pair{x_{n+1}-z}{x_{n+1}^*+Bz}
\nonumber\\
&\hskip 5mm
=D_{f_n}(z,x_n)-D_{f_n}(x_{n+1},x_n)+
\gamma_n\pair{x_n-x_{n+1}}{Bx_n-Bz}
-\gamma_n\pair{x_n-z}{Bx_n-Bz}
\nonumber\\
&\hskip 5mm
\quad\;-\varepsilon\gamma_n\pair{x_{n+1}-z}{x_{n+1}^*+Bz}
\nonumber\\
&\hskip 5mm
\leq D_{f_n}(z,x_n)-D_{f_n}(x_{n+1},x_n)+\kappa\gamma_n
D_f(x_{n+1},x_n)
+\delta_1\gamma_n\pair{x_n-z}{x_n^*+Bz}
\nonumber\\
&\hskip 5mm
\quad\;+\delta_2\gamma_n\pair{x_n-z}{Bx_n-Bz}
-\gamma_n\pair{x_n-z}{Bx_n-Bz}
-\varepsilon\gamma_n\pair{x_{n+1}-z}{x_{n+1}^*+Bz}
\nonumber\\
&\hskip 5mm
\leq D_{f_n}(z,x_n)+\delta_1\gamma_n\pair{x_n-z}{x_n^*+Bz}
-(1-\kappa\gamma_n/\alpha)D_{f_n}(x_{n+1},x_n)
\nonumber\\
&\hskip 5mm
\quad\;
-\varepsilon\gamma_n\pair{x_{n+1}-z}{x_{n+1}^*+Bz}
-(1-\delta_2)\gamma_n\pair{x_n-z}{Bx_n-Bz}
\nonumber\\
&\hskip 5mm
=\Delta_n-\theta_n.
\end{align}
Consequently, by condition~\ref{a:1b} in Algorithm~\ref{a:1}
and \eqref{e:8832},
\begin{align}
\Delta_{n+1}
&=D_{f_{n+1}}(z,x_{n+1})+\delta_1\gamma_{n+1}
\pair{x_{n+1}-z}{x_{n+1}^*+Bz}
\nonumber\\
&\leq(1+\eta_n)\big(D_{f_n}(z,x_{n+1})+\delta_1\gamma_{n+1}
\pair{x_{n+1}-z}{x_{n+1}^*+Bz}\big)
\nonumber\\
&\leq(1+\eta_n)(\Delta_n-\theta_n)
\nonumber\\
&\leq(1+\eta_n)\Delta_n-\theta_n.
\end{align}
Hence, \cite[Lemma~5.31]{Livre1} asserts that
\begin{equation}
\label{e:1637}
(\Delta_n)_{n\in\NN}\;\text{converges and}\;
\sum_{n\in\NN}\theta_n<\pinf.
\end{equation}
In turn, we infer from \eqref{e:4650} and condition~\ref{a:1a}
in Algorithm~\ref{a:1} that
\begin{equation}
\begin{cases}
\Sum_{n\in\NN}(1-\kappa\gamma_n/\alpha)D_{f_n}(x_{n+1},x_n)<\pinf\\
\Sum_{n\in\NN}\pair{x_{n+1}-z}{x_{n+1}^*+Bz}<\pinf\\
\Sum_{n\in\NN}(1-\delta_2)\pair{x_n-z}{Bx_n-Bz}<\pinf. 
\end{cases}
\end{equation}
Thus, since $(f_n)_{n\in\NN}$ lies in $\BC_\alpha(f)$, we obtain
$\sum_{n\in\NN}(1-\kappa\gamma_n/\alpha)D_f(x_{n+1},x_n)<\pinf$.
It results from \eqref{e:1637} and
\eqref{e:4650} that $(D_{f_n}(z,x_n))_{n\in\NN}$
converges.

\ref{p:1vii}: 
Recall that $(x_n)_{n\in\NN}$ lies in $C$.

\ref{p:1viia}: Clear.

\ref{p:1viib}:
We derive from \ref{p:1i} that $(D_f(z,x_n))_{n\in\NN}$ is bounded.
In turn, \cite[Lemma~7.3(viii)]{Ccm01} asserts that
$(x_n)_{n\in\NN}$ is bounded.

\ref{p:1viic}:
It results from \cite[Theorem~3.5.10]{Zali02} that
there exists a function
$\phi\colon\RP\to\RPX$ that vanishes only at $0$ such that
$\lim_{t\to\pinf}\phi(t)/t\to\pinf$ and
\begin{equation}
\label{e:3872}
(\forall x\in\intdom f)(\forall y\in\dom f)
\quad
\pair{y-x}{\nabla f(x)}+f(x)+\phi\big(\|x-y\|\big)\leq f(y).
\end{equation}
Hence, in the light of \ref{p:1i},
$\sup_{n\in\NN}\phi(\|x_n-z\|)
\leq\sup_{n\in\NN}D_f(z,x_n)
\leq(1/\alpha)\sup_{n\in\NN}D_{f_n}(z,x_n)<\pinf$
and $(x_n)_{n\in\NN}$ is therefore bounded.

\ref{p:1viic-}:
Suppose that there exists a subsequence
$(x_{k_n})_{n\in\NN}$ of $(x_n)_{n\in\NN}$ such that
$\|x_{k_n}\|\to\pinf$.
We deduce from \cite[Lemma~7.3(vii)]{Ccm01} and \ref{p:1i} that
\begin{equation}
\label{e:2917}
\sup_{n\in\NN}D_{f^*}\big(\nabla f(x_n),\nabla f(z)\big)
=\sup_{n\in\NN}D_{f}(z,x_n)
\leq\dfrac{1}{\alpha}\sup_{n\in\NN}D_{f_n}(z,x_n)<\pinf.
\end{equation}
However, $f^*$ is a Legendre function by virtue of
\cite[Corollary~5.5]{Ccm01}
and $\nabla f(z)\in\intdom f^*$
by virtue of \cite[Theorem~5.10]{Ccm01}.
Thus, \cite[Lemma~7.3(v)]{Ccm01}
guarantees that $D_{f^*}(\mute,\nabla f(z))$ is coercive. 
It therefore follows from \eqref{e:2917} that 
$(\nabla f(x_{k_n}))_{n\in\NN}$ is bounded,
and then from the reflexivity of $\XX^*$ that
$\WC(\nabla f(x_{k_n}))_{n\in\NN}\neq\emp$.
In turn, there exist a subsequence $(x_{l_{k_n}})_{n\in\NN}$ of
$(x_{k_n})_{n\in\NN}$ and $x^*\in\XX^*$ such that
$\nabla f(x_{l_{k_n}})\weakly x^*$.
The weak lower semicontinuity of $f^*$
and \eqref{e:2917} yield
$D_{f^*}(x^*,\nabla f(z))\leq\varliminf D_{f^*}
(\nabla f(x_{l_{k_n}}),\nabla f(z))<\pinf$. Therefore
\begin{equation}
\label{e:5079}
\nabla f(x_{l_{k_n}})\weakly x^*\in\dom f^*=\intdom f^*.
\end{equation}
Moreover, \cite[Theorem~5.10]{Ccm01} asserts that
$\nabla f^*(x^*)\in\intdom f$
and $(\forall n\in\NN)\;\nabla f^*\big(\nabla f (x_n)\big)=x_n$.
Hence, \eqref{e:5079}
and the weak sequential continuity of $\nabla f^*$ imply that
$x_{l_{k_n}}=\nabla f^*(\nabla f(x_{l_{k_n}}))
\weakly\nabla f^*(x^*)$. This yields
$\sup_{n\in\NN}\|x_{l_{k_n}}\|<\pinf$ and we reach a contradiction.

\ref{p:1viid}:
A consequence of \cite[Lemma~7.3(ix)]{Ccm01} and \ref{p:1i}.

\ref{p:1viie}:
It results from \cite[Lemma~7.3(v)]{Ccm01} that $D_f(\mute,z)$ is
coercive. In turn, since $\sup_{n\in\NN}D_f(x_n,z)\leq(1/\rho)
\sup_{n\in\NN}D_f(z,x_n)<\pinf$ by \ref{p:1i}, 
$(x_n)_{n\in\NN}$ is bounded.
\end{proof}

As seen in Proposition~\ref{p:1}, by construction, an orbit of
Algorithm~\ref{a:1} lies in $C$ and therefore in $\IDD$. 
Next, we proceed to identify sufficient conditions that
guarantee that their weak sequential cluster points are also in
$\IDD$.

\begin{proposition}
\label{p:2}
Let $(x_n)_{n\in\NN}$ be a sequence generated by Algorithm~\ref{a:1}
and suppose that one of the following holds:
\begin{enumerate}[label={\rm[\alph*]}]
\item
\label{p:2a}
$\cdom f\cap\cdom A\subset\intdom f$.
\item
\label{p:2b}
$f$ is essentially strictly convex with $\dom f^*$ open and 
$\nabla f^*$ weakly sequentially continuous.
\item
\label{p:2c}
$f$ is strictly convex on $\intdom f$ and $\displaystyle
\rho=\inf_{\substack{x\in\intdom f\\ y\in\intdom f\\ x\neq y}}
\;\frac{D_{f}(x,y)}{D_{f}(y,x)}\in\RPP$.
\item
\label{p:2d}
$\XX$ is finite-dimensional.
\end{enumerate}
Then $\WC(x_n)_{n\in\NN}\subset\intdom f$.
\end{proposition}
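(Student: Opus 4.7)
The plan is to fix an arbitrary weak sequential cluster point $x\in\WC(x_n)_{n\in\NN}$, extract a subsequence $x_{k_n}\weakly x$, and verify $x\in\intdom f$ under each of the four alternative hypotheses. Throughout I would use that $(x_n)_{n\in\NN}\subset C$ by Proposition~\ref{p:1} and that, since $f_n\in\BC_\alpha(f)$, Proposition~\ref{p:1}\ref{p:1i} yields $\sup_{n\in\NN} D_f(z,x_n)\leq(1/\alpha)\sup_{n\in\NN}D_{f_n}(z,x_n)<\pinf$ for any $z\in\mathscr{S}$.

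Case~\ref{p:2a} would follow from Mazur's theorem applied to the two convex sets $\dom f$ (convex as the domain of a convex function) and $\cdom A$ (convex since $A$ is maximally monotone): both are weakly sequentially closed, so $x\in\cdom f\cap\cdom A\subset\intdom f$. Case~\ref{p:2b} would replicate the argument of Proposition~\ref{p:1}\ref{p:1vii}\ref{p:1viic-}: the identity $D_{f^*}(\nabla f(x_n),\nabla f(z))=D_f(z,x_n)$ together with coercivity of $D_{f^*}(\mute,\nabla f(z))$ gives boundedness of $(\nabla f(x_{k_n}))_{n\in\NN}$; passing to a sub-subsequence $\nabla f(x_{l_{k_n}})\weakly x^*$, the weak lower semicontinuity of $f^*$ and openness of $\dom f^*$ place $x^*\in\intdom f^*$, and weak sequential continuity of $\nabla f^*$ then forces $x_{l_{k_n}}=\nabla f^*(\nabla f(x_{l_{k_n}}))\weakly\nabla f^*(x^*)\in\intdom f$ via \cite[Theorem~5.10]{Ccm01}; uniqueness of weak limits identifies $x=\nabla f^*(x^*)\in\intdom f$.

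For case~\ref{p:2c}, the $\rho$-bound would first give $\sup_{n\in\NN} D_f(x_n,z)\leq(1/\rho)\sup_{n\in\NN} D_f(z,x_n)<\pinf$, and the weak lower semicontinuity of $D_f(\mute,z)=f(\mute)-f(z)-\pair{\mute-z}{\nabla f(z)}$ (convex and lower semicontinuous as the sum of $f$ and an affine functional) would give $D_f(x,z)<\pinf$, hence $x\in\dom f$. Upgrading to $x\in\intdom f$ is the chief obstacle: I would argue by contradiction, using the strict convexity on $\intdom f$ together with the essential smoothness of $f$ and the symmetry-like $\rho$-condition to show that a boundary cluster point would force a blow-up of $\nabla f(x_{k_n})$ which --- propagated through the $\rho$-symmetry --- would send $D_f(z,x_{k_n})\to\pinf$, contradicting the uniform bound. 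Case~\ref{p:2d} is the easiest: in finite dimensions weak and strong convergence coincide, so $x_{k_n}\to x$, and the uniform bound on $D_f(z,x_n)$ together with the essential smoothness of $f$ forces $x\in\intdom f$ via \cite[Lemma~7.3(ix)]{Ccm01}, which asserts that the sublevel sets of $D_f(z,\mute)$ have their closure inside $\intdom f$.
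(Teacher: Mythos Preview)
Cases~\ref{p:2a}, \ref{p:2b}, and \ref{p:2d} in your proposal are essentially the paper's arguments. Two minor quibbles: in \ref{p:2a} it is $\cdom f$, not $\dom f$, that is weakly closed (the domain of a convex function need not be closed), though your conclusion $x\in\cdom f\cap\cdom A$ is still correct; in \ref{p:2d} the paper cites \cite[Theorem~3.8(ii)]{BB97} rather than \cite[Lemma~7.3(ix)]{Ccm01}, but the strategy is the same.

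Case~\ref{p:2c} is where your proposal has a genuine gap. You only have $x_{k_n}\weakly x$, not strong convergence, so essential smoothness does \emph{not} force a blow-up of $\nabla f(x_{k_n})$: in infinite dimensions a sequence in $\intdom f$ can converge weakly to a boundary point while the gradients remain bounded. And even if $\|\nabla f(x_{k_n})\|\to\pinf$, it is not clear how the $\rho$-bound would turn this into $D_f(z,x_{k_n})\to\pinf$ --- a quantity you already know is bounded. Your sketched contradiction therefore does not go through.

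The paper's route is structurally different: after placing $x\in\dom f$ exactly as you do, it never reasons about the cluster point again. Instead it proves that the $\rho$-condition forces $\dom f$ to be \emph{open}, so that $\dom f=\intdom f$ and the conclusion is immediate. The argument takes an arbitrary hypothetical $y\in\dom f\setminus\intdom f$, sets $y_n=\alpha_n y+(1-\alpha_n)z$ with $\alpha_n\uparrow 1$ (so $y_n\in\intdom f$ and $y_n\to y$ \emph{strongly}), and now essential smoothness genuinely fires: $\pair{z-y}{\nabla f(y_n)}\to\minf$, whence $D_f(z,y_n)\to\pinf$. Meanwhile convexity and lower semicontinuity give $D_f(y_n,z)\to D_f(y,z)<\pinf$. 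Hence $D_f(y_n,z)/D_f(z,y_n)\to 0$, contradicting $\rho>0$. The key idea you are missing is to exploit the $\rho$-condition along a constructed segment with strong convergence, rather than along the algorithmic sequence.
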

\begin{proof}
Suppose that $x\in\WC(x_n)_{n\in\NN}$, say $x_{k_n}\weakly x$,
and fix $z\in\mathscr{S}$.

\ref{p:2a}:
Since $\cdom f$ is closed and convex, it is weakly
closed \cite[Corollary~II.6.3.3(i)]{Bour81}. Hence,
since Proposition~\ref{p:1} asserts that 
$(x_n)_{n\in\NN}$ lies in $C\subset\dom f$, we infer that
$\WC(x_n)_{n\in\NN}\subset\cdom f$. Likewise,
since $\cdom A$ is a closed convex set 
\cite[Theorem~3.11.12]{Zali02} and
$(x_n)_{n\in\NN}$ lies in $C\subset\dom A$, we obtain
$\WC(x_n)_{n\in\NN}\subset\cdom A$. Altogether,
$\WC(x_n)_{n\in\NN}\subset\cdom f\cap\cdom A\subset\intdom f$.

\ref{p:2b}:
Using an argument similar to that of the proof of
Proposition~\ref{p:1}\ref{p:1vii}\ref{p:1viic-},
we infer that there exist
a strictly increasing sequence $(l_{k_n})_{n\in\NN}$
in $\NN$ and $x^*\in\intdom f^*$ such that
$x_{l_{k_n}}\weakly\nabla f^*(x^*)$.
Thus, appealing to \cite[Theorem~5.10]{Ccm01}, we conclude that
$x=\nabla f^*(x^*)\in\intdom f$.

\ref{p:2c}:
Proposition~\ref{p:1}\ref{p:1i} and the weak lower semicontinuity
of $D_f(\mute,z)$ yield
\begin{equation}
D_f(x,z)\leq\varliminf D_f(x_{k_n},z)
\leq(1/\rho)\varliminf D_f(z,x_{k_n})
\leq(\alpha\rho)^{-1}\lim D_{f_{k_n}}(z,x_{k_n})<\pinf.
\end{equation}
Thus $x\in\dom f$. We show that $\dom f$ is open.
Suppose that there exists $y\in\dom f\setminus\intdom f$, 
let $(\alpha_n)_{n\in\NN}$ be a sequence in $\zeroun$ such that 
$\alpha_n\to 1$, and set
$(\forall n\in\NN)$ $y_n=\alpha_ny+(1-\alpha_n)z$. Then 
$\{y_n\}_{n\in\NN}\subset\left]y,z\right[
\subset(\intdom f)\setminus\{z\}$
\cite[Proposition~II.2.6.16]{Bour81}. 
Moreover, $y_n\to y$ and, by convexity of $f$, 
$(\forall n\in \NN)$
$D_{f}(y_n,z)\leq\alpha_n(f(y)-f(z)-\pair{y-z}{\nabla f(z)})$.
Hence
\begin{equation}
\label{e:0871}
\varlimsup D_{f}(y_n,z)
\leq f(y)-f(z)-\pair{y-z}{\nabla f(z)}=D_{f}(y,z).
\end{equation}
However, it results from the lower semicontinuity of $f$ that
$\varliminf D_{f}(y_n,z) 
=\varliminf(f(y_n)-f(z))
-\lim\pair{y_n-z}{\nabla f(z)}
\geq f(y)-f(z)
-\pair{y-z}{\nabla f(z)}
=D_{f}(y,z)$.
Hence, \eqref{e:0871} forces
\begin{equation}
\label{e:0590}
\lim D_{f}(y_n,z)=D_{f}(y,z).
\end{equation}
In addition, by convexity of $f$,
$(\forall n\in\NN)\; 
D_{f}(z,y_n)\geq\alpha_n(f(z)-f(y)-\pair{z-y}{\nabla f(y_n)})$.
However,
\cite[Theorem~5.6]{Ccm01}
and the essential smoothness of $f$ entail that 
\begin{equation}
\pair{z-y}{\nabla f(y_n)}=\pair{z-y}{\nabla
f(y+(1-\alpha_n)(z-y))}\to\minf.
\end{equation}
Thus,
\begin{equation}
\label{e:05902}
\pinf
=\lim\Big(\alpha_n\big(f(z)-f(y)
-\pair{z-y}{\nabla f(y_n)}\big)\Big)
\leq\varliminf D_{f}(z,y_n).
\end{equation}
It results from \eqref{e:0590} and \eqref{e:05902} that 
$0<\rho\leq\lim D_{f}(y_n,z)/D_{f}(z,y_n)=0$, so that we reach a 
contradiction. Consequently, $\dom f$ is open and hence
$x\in\dom f=\intdom f$.

\ref{p:2d}:
Proposition~\ref{p:1}\ref{p:1i} ensures that
$(x_{k_n})_{n\in\NN}$ is a sequence in $\intdom f$
such that $(D_f(z,x_{k_n}))_{n\in\NN}$ is bounded.
Therefore, \cite[Theorem~3.8(ii)]{BB97} and the essential
smoothness of $f$ yield $x\in\intdom f$.
\end{proof}

\begin{definition}
\label{d:f}
Algorithm~\ref{a:1} is focusing if, for every $z\in\mathscr{S}$, 
\begin{equation}
\label{e:f}
\begin{cases}
\big(D_{f_n}(z,x_n)\big)_{n\in\NN}\;\text{converges}\\
\displaystyle
\sum_{n\in\NN}\Pair{x_{n+1}-z}{\gamma_n^{-1}\big(\nabla f_n(x_n)
-\nabla f_n(x_{n+1})\big)-Bx_n+{B}z}<\pinf\\
\displaystyle
\sum_{n\in\NN}(1-\delta_2)\Pair{x_n-z}{Bx_n-Bz}<\pinf\\
\displaystyle
\sum_{n\in\NN}(1-\kappa\gamma_n/\alpha)D_{f_n}(x_{n+1},x_n)<\pinf
\end{cases}
\quad\Rightarrow\quad
\WC(x_n)_{n\in\NN}\subset\zer(A+B).
\end{equation}
\end{definition}

Our main result establishes the weak convergence of the
orbits of Algorithm~\ref{a:1}.

\begin{theorem}
\label{t:1}
Let $(x_n)_{n\in\NN}$ be a sequence generated by 
Algorithm~\ref{a:1} and suppose that the following hold:
\begin{enumerate}[label={\rm[\alph*]}]
\item
\label{t:1a}
$(x_n)_{n\in\NN}$ is bounded.
\item
\label{t:1b}
$\WC(x_n)_{n\in\NN}\subset\intdom f$.
\item
\label{t:1d}
Algorithm~\ref{a:1} is focusing. 
\item
\label{t:1c}
One of the following is satisfied:
\begin{enumerate}[label={\rm\arabic*/}]
\item
\label{t:1c1}
$\mathscr{S}$ is a singleton.
\item
\label{t:1c2}
There exists a function $g$ in $\Gamma_0(\XX)$ which is 
G\^ateaux differentiable on $\intdom g\supset C$, with
$\nabla g$ strictly monotone on $C$, and such that,
for every sequence $(y_n)_{n\in\NN}$ in $C$ and every
$y\in\WC(y_n)_{n\in\NN}\cap C$, 
$y_{k_n}\weakly y$ $\Rightarrow$
$\nabla f_{k_n}(y_{k_n})\weakly\nabla g(y)$.
\end{enumerate}
\end{enumerate}
Then $(x_n)_{n\in\NN}$ converges weakly to a point in
$\mathscr{S}$.
\end{theorem}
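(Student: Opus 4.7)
The plan is to combine the focusing property with a Bregman--Opial style uniqueness argument, using the auxiliary function $g$ from \ref{t:1c2} to separate two potential weak cluster points.

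First, I would invoke Proposition~\ref{p:1} to verify that the four premises of the focusing implication \eqref{e:f} are all in force for every $z\in\mathscr{S}$: item~\ref{p:1i} supplies the convergence of $(D_{f_n}(z,x_n))_{n\in\NN}$, while items~\ref{p:1v}, \ref{p:1vi}, and the $D_{f_n}$ part of \ref{p:1iii} yield the three summability conditions. Hypothesis~\ref{t:1d} then gives $\WC(x_n)_{n\in\NN}\subset\zer(A+B)$, which combined with \ref{t:1b} produces
\begin{equation}
\WC(x_n)_{n\in\NN}\subset(\intdom f)\cap\zer(A+B)=\mathscr{S}.
\end{equation}
By \ref{t:1a} and reflexivity of $\XX$, $\WC(x_n)_{n\in\NN}\neq\emp$, so it suffices to show that this set is a singleton.

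Under alternative~\ref{t:1c1} this is immediate. Under~\ref{t:1c2} I would argue by contradiction: assume $y_1,y_2\in\mathscr{S}\subset C$ are distinct and $x_{k_n}\weakly y_1$, $x_{l_n}\weakly y_2$ along suitable subsequences. Two preparatory convergence facts are then needed. First, $(D_{f_n}(y_i,x_n))_{n\in\NN}\to\lambda_i$ by Proposition~\ref{p:1}\ref{p:1i}. Second, condition~\ref{a:1b} in Algorithm~\ref{a:1} applied pointwise implies that the nonnegative sequence $(D_{f_n}(y_1,y_2))_{n\in\NN}$ satisfies $D_{f_{n+1}}(y_1,y_2)\leq(1+\eta_n)D_{f_n}(y_1,y_2)$ with $\sum_{n\in\NN}\eta_n<\pinf$, hence converges by \cite[Lemma~5.31]{Livre1}.

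The decisive step is the three-point identity
\begin{equation}
D_{f_n}(y_1,x_n)-D_{f_n}(y_2,x_n)=D_{f_n}(y_1,y_2)+\pair{y_1-y_2}{\nabla f_n(y_2)-\nabla f_n(x_n)}.
\end{equation}
Hypothesis~\ref{t:1c2} applied to the constant sequence at $y_2\in C$ yields $\nabla f_n(y_2)\weakly\nabla g(y_2)$, while applied along $(x_{k_n})_{n\in\NN}$ and $(x_{l_n})_{n\in\NN}$ it yields $\nabla f_{k_n}(x_{k_n})\weakly\nabla g(y_1)$ and $\nabla f_{l_n}(x_{l_n})\weakly\nabla g(y_2)$, respectively. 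Passing to the limit in the identity along $(k_n)$ and along $(l_n)$ and equating the two resulting expressions for $\lim_n D_{f_n}(y_1,y_2)$ produces
\begin{equation}
\pair{y_1-y_2}{\nabla g(y_1)-\nabla g(y_2)}=0,
\end{equation}
which, by strict monotonicity of $\nabla g$ on $C$, forces $y_1=y_2$, the desired contradiction.

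The main obstacle I anticipate is recognizing that the \emph{multiplicative} (rather than additive) perturbation bound in condition~\ref{a:1b} still delivers genuine convergence, not merely boundedness, of $(D_{f_n}(y_1,y_2))_{n\in\NN}$; without this, one could only extract a common subsubsequence along which the Bregman distance converges, which is not enough to equate the two limits obtained from $(k_n)$ and $(l_n)$. Apart from that, the argument is a matter of selecting the correct three-point identity so that each of its four constituents has an individually controllable limit.
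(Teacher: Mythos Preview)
Your proposal is correct and follows essentially the same route as the paper: invoke Proposition~\ref{p:1} to trigger the focusing implication, combine with \ref{t:1b} to obtain $\WC(x_n)_{n\in\NN}\subset\mathscr{S}$, and then under~\ref{t:1c2} use the three-point identity together with the convergence of $(D_{f_n}(y_1,y_2))_{n\in\NN}$ (via condition~\ref{a:1b} and \cite[Lemma~5.31]{Livre1}) and the weak limits supplied by~\ref{t:1c2} to force $\pair{y_1-y_2}{\nabla g(y_1)-\nabla g(y_2)}=0$. The only cosmetic differences are that the paper argues directly (rather than by contradiction) and isolates the scalar limit $\ell=\lim\pair{y_1-y_2}{\nabla f_n(x_n)-\nabla f_n(y_2)}$ before passing to the two subsequences, whereas you pass to the limit in the full identity; the content is identical.
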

\begin{proof}
It results from \ref{t:1a} and the reflexivity of $\XX$ that
\begin{equation}
\label{e:2918}
(x_n)_{n\in\NN}\;\text{lies in a weakly sequentially compact set}.
\end{equation}
On the other hand, \ref{t:1d} and items
\ref{p:1i}--\ref{p:1vi} in
Proposition~\ref{p:1} yield $\WC(x_n)_{n\in\NN}\subset\zer(A+B)$.
In turn, it results from \ref{t:1b} that
\begin{equation}
\label{e:6034}
\emp\neq\WC(x_n)_{n\in\NN}\subset\mathscr{S}\subset C.
\end{equation}
In view of \cite[Lemma~1.35]{Livre1} applied in $\XX^{\text{weak}}$,
it remains to show that $\WC(x_n)_{n\in\NN}$ is a singleton.
If \ref{t:1c}\ref{t:1c1} holds, this follows from \eqref{e:6034}.
Now suppose that \ref{t:1c}\ref{t:1c2} holds, 
and take $y_1$ and $y_2$ in $\WC(x_n)_{n\in\NN}$, say
$x_{k_n}\weakly y_1$ and $x_{l_n}\weakly y_2$. Then
$y_1\in\mathscr{S}$ and $y_2\in\mathscr{S}$
by virtue of \eqref{e:6034}, and we therefore
deduce from Proposition~\ref{p:1}\ref{p:1i} that
$(D_{f_n}(y_1,x_n))_{n\in\NN}$ and
$(D_{f_n}(y_2,x_n))_{n\in\NN}$ converge.
However, condition~\ref{a:1b} in Algorithm~\ref{a:1}
and \cite[Lemma~5.31]{Livre1} assert that
$(D_{f_n}(y_1,y_2))_{n\in\NN}$ converges.
Hence, appealing to \cite[Proposition~2.3(ii)]{Sico03},
it follows that
$(\pair{y_1-y_2}{\nabla f_n(x_n)-\nabla f_n(y_2)})_{n\in\NN}
=(D_{f_n}(y_2,x_n)+D_{f_n}(y_1,y_2)-D_{f_n}(y_1,x_n))_{n\in\NN}$
converges. Set $\ell=\lim\pair{y_1-y_2}{\nabla f_n(x_n)
-\nabla f_n(y_2)}$. Since $(x_n)_{n\in\NN}$ is a sequence in
$C$, we infer from \eqref{e:6034} and \ref{t:1c}\ref{t:1c2} that
$\ell\leftarrow\pair{y_1-y_2}{\nabla f_{l_n}(x_{l_n})
-\nabla f_{l_n}(y_2)}\rightarrow\pair{y_1-y_2}{\nabla g(y_2)
-\nabla g(y_2)}=0$, which yields $\ell=0$.
However, invoking \ref{t:1c}\ref{t:1c2}, we obtain
$\ell\leftarrow\pair{y_1-y_2}{\nabla f_{k_n}(x_{k_n})
-\nabla f_{k_n}(y_2)}\rightarrow\pair{y_1-y_2}{\nabla g(y_1)
-\nabla g(y_2)}$. It therefore follows that
$\pair{y_1-y_2}{\nabla g(y_1)-\nabla g(y_2)}=0$ and hence from 
the strict monotonicity of $\nabla g$ on $C$ that $y_1=y_2$.
\end{proof}

\begin{example}
\label{ex:56}
We provide an example with operating conditions that are not
captured by any of the methods described in
\eqref{e:5}--\eqref{e:8}. Let $p\in\left]1,\pinf\right[$, let
$(\chi_n)_{n\in\NN}$ be a sequence in $\left[1,\pinf\right[$ such
that $\chi_n\to 1$, and let $(\eta_n)_{n\in\NN}$ be a summable
sequence in $\RP$ such that $(\forall n\in\NN)$
$\chi_{n+1}\leq(1+\eta_n)\chi_n$. We denote by
$z=(\zeta_k)_{k\in\NN}$
a sequence in $\ell^p(\NN)$. Set $\XX=\ell^p(\NN)\times\RR$,
hence $\XX^*=\ell^{p/(p-1)}(\NN)\times\RR$, and define the
Legendre functions
\begin{equation}
\label{e:l}
(\forall n\in\NN)\quad f_n\colon\XX\to\RX\colon(z,\xi)\mapsto
\begin{cases}
\dfrac{\chi_n}{p}\|z\|^p+1-\xi+\xi\ln\xi,
&\text{if}\;\;\xi>0;\\[4mm]
\dfrac{\chi_n}{p}\|z\|^p+1,&\text{if}\;\;\xi=0;\\
\pinf,&\text{if}\;\;\xi\leq 0
\end{cases}
\end{equation}
and
\begin{equation}
\label{e:l2}
f=g\colon\XX\to\RX\colon(z,\xi)\mapsto
\begin{cases}
\dfrac{1}{p}\|z\|^p-\xi+\xi\ln\xi,
&\text{if}\;\;\xi>0;\\[4mm]
\dfrac{1}{p}\|z\|^p,&\text{if}\;\;\xi=0;\\
\pinf,&\text{if}\;\;\xi\leq 0.
\end{cases}
\end{equation}
Now let $\psi\colon\XX\to\RP\colon(z,\xi)\mapsto\|z\|^p/p$, set
$B=\nabla\psi$, and let $A\colon\XX\to 2^{\XX^*}$ be any maximally
monotone operator such that
\begin{equation}
\dom A\subset\ell^p(\NN)\times\RPP\quad\text{and}\quad
\zer(A+B)\neq\emp. 
\end{equation}
Let us check that this setting conforms to that of
Theorem~\ref{t:1}.
First, Proposition~\ref{p:7}\ref{p:7v} implies that
\eqref{e:1d} is satisfied with $\delta_1=0$ and
$\delta_2=\kappa=1$. Next, we note that 
$\intdom f=\ell^p(\NN)\times\RPP$, that $(f_n)_{n\in\NN}$ lies in
$\BC_1(f)$, and that condition \ref{a:1b} in Algorithm~\ref{a:1}
holds. Furthermore, we derive from \eqref{e:l} that
\begin{equation}
\label{e:kf}
(\forall n\in\NN)\quad
\nabla f_n\colon\ell^p(\NN)\times\RPP\to\XX^*
\colon(z,\xi)\mapsto\Big(\chi_n\big(\operatorname{sign}
(\zeta_k)|\zeta_k|^{p-1}\big)_{k\in\NN},\ln\xi\Big)
\end{equation}
and we observe that
\begin{equation}
\label{e:kfu}
(\forall n\in\NN)\quad\ran\nabla f_n=\XX^*\quad
\text{and}\quad\dom(\gamma_n A)\subset\dom\nabla f_n.
\end{equation}
It therefore follows from the Br\'ezis--Haraux theorem
\cite[Th\'eor\`eme~4]{Brez76} that
\begin{equation}
\label{e:bh}
(\forall n\in\NN)\quad\ran(\nabla f_n+\gamma_n A)=\XX^*,
\end{equation}
and hence that condition \ref{a:1c} in Algorithm~\ref{a:1}
holds. It remains to verify condition~\ref{t:1c}\ref{t:1c2} in
Theorem~\ref{t:1}. 
Set $\varphi\colon\ell^p(\NN)\to\RP\colon z\mapsto\|z\|^p/p$ and
$(\forall n\in\NN)$ $\varphi_n\colon\ell^p(\NN)\to\RP\colon 
z\mapsto\chi_n\|z\|^p/p$.
Take a sequence
$(z_n,\xi_n)_{n\in\NN}$ in $\dom A$ and a point
$(z,\xi)\in\dom A$ such that $(z_n,\xi_n)\weakly(z,\xi)$.
We have $\xi_n\to\xi$ and $(\forall k\in\NN)$
$\zeta_{n,k}\to\zeta_k$. Now let
$(e_k)_{k\in\NN}$ be the canonical Schauder basis of $\ell^p(\NN)$.
Then
\begin{equation}
(\forall k\in\NN)\quad 
\Pair{e_k}{\nabla\varphi_n(z_n)}=
\chi_n\operatorname{sign}(\zeta_{n,k})|\zeta_{n,k}|^{p-1}\to
\operatorname{sign}(\zeta_{k})|\zeta_{k}|^{p-1}=
\Pair{e_k}{\nabla\varphi(z)}
\end{equation}
and $(\nabla\varphi_n(z_n))_{n\in\NN}$ is bounded. It therefore
follows from \cite[Th\'eor\`eme~VIII-2]{Bana32} that 
$\nabla\varphi_n(z_n)\weakly\nabla\varphi(z)$ and, in turn,
that $\nabla f_n(z_n,\xi_n)\weakly\nabla g(z,\xi)$ by
\eqref{e:l2} and \eqref{e:kf}. 
Note that the above setting is not covered by the assumptions
underlying \eqref{e:5}--\eqref{e:8}:
the fact that $B\neq 0$ excludes \cite{Sico03},
the fact that $\XX$ is not a Hilbert space excludes
\cite{Opti14} and \cite{Rena97},
and \cite{Nguy17} is excluded because
$A$ is not a subdifferential.
\end{example}

\section{Special cases and applications}
\label{sec:3}

We illustrate the general scope of Theorem~\ref{t:1} by 
recovering apparently unrelated results and also by deriving 
new ones. Sufficient conditions for 
\ref{t:1a} and~\ref{t:1b} in Theorem~\ref{t:1} to hold can be
found in Propositions~\ref{p:1}\ref{p:1vii} and \ref{p:2},
respectively. As to checking the focusing condition~\ref{t:1d}, the
following fact will be useful.

\begin{lemma}
\label{l:1}
{\rm\cite[Proposition~2.1(iii)]{Joca16}}
Let $M_1\colon\XX\to 2^{\XX^*}$ and $M_2\colon\XX\to 2^{\XX^*}$ be
maximally monotone, let $(a_n,a_n^*)_{n\in\NN}$ be a sequence 
in $\gra M_1$, let $(b_n,b_n^*)_{n\in\NN}$ be a sequence in 
$\gra M_2$, let $x\in\XX$, and let $y^*\in\XX^*$. 
Suppose that $a_n\weakly x$, $b_n^*\weakly y^*$, 
$a_n^*+b_n^*\to 0$, and $a_n-b_n\to 0$. Then 
$x\in\zer(M_1+M_2)$.
\end{lemma}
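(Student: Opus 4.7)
The plan is to identify explicit elements of $M_1 x$ and $M_2 x$ whose sum is $0$. The natural candidates are $-y^*$ and $y^*$: combining $a_n^*+b_n^*\to 0$ with $b_n^*\weakly y^*$ yields $a_n^*\weakly -y^*$, while $a_n-b_n\to 0$ together with $a_n\weakly x$ yields $b_n\weakly x$. Thus, the task reduces to proving $(x,-y^*)\in\gra M_1$ and $(x,y^*)\in\gra M_2$, since these two inclusions combined give $0=-y^*+y^*\in M_1 x+M_2 x$.

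The first step will be to derive, for every $(u,u^*)\in\gra M_1$ and every $(v,v^*)\in\gra M_2$, the joint inequality
\begin{equation}
\label{eq:starplan}
\pair{u-x}{u^*+y^*}+\pair{v-x}{v^*-y^*}\geq 0
\end{equation}
by summing the monotonicity bounds $\pair{a_n-u}{a_n^*-u^*}\geq 0$ and $\pair{b_n-v}{b_n^*-v^*}\geq 0$ and passing to the limit. Every product of a weakly convergent sequence against a fixed vector is routine; the only potentially troublesome term is the cross term $\pair{a_n}{a_n^*}+\pair{b_n}{b_n^*}$, which pairs two weak limits. The key observation is the identity
\begin{equation}
\pair{a_n}{a_n^*}+\pair{b_n}{b_n^*}=\pair{a_n}{a_n^*+b_n^*}+\pair{b_n-a_n}{b_n^*},
\end{equation}
whose right-hand side tends to $0$ because $a_n^*+b_n^*\to 0$ and $b_n-a_n\to 0$ strongly while $(a_n)_{n\in\NN}$ and $(b_n^*)_{n\in\NN}$ are bounded (each being weakly convergent in a reflexive space). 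A mechanical rearrangement then yields \eqref{eq:starplan}.

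To conclude, I will argue by contradiction that $(x,-y^*)\in\gra M_1$: if not, the maximal monotonicity of $M_1$ supplies $(u_0,u_0^*)\in\gra M_1$ with $c:=-\pair{u_0-x}{u_0^*+y^*}>0$, and \eqref{eq:starplan} then forces $\pair{v-x}{v^*-y^*}\geq c>0$ for every $(v,v^*)\in\gra M_2$. The weaker bound $\geq 0$ yields $(x,y^*)\in\gra M_2$ by the maximal monotonicity of $M_2$, and specializing the strict bound to the pair $(v,v^*)=(x,y^*)$ produces $0\geq c>0$, the desired contradiction. Once $(x,-y^*)\in\gra M_1$ is established, substituting $(u,u^*)=(x,-y^*)$ into \eqref{eq:starplan} gives $\pair{v-x}{v^*-y^*}\geq 0$ for every $(v,v^*)\in\gra M_2$, whence $(x,y^*)\in\gra M_2$ by maximal monotonicity. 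The only subtle point is the weak-weak limit passage in the cross term, which the algebraic rearrangement above circumvents; beyond that, the argument is a clean two-step appeal to maximal monotonicity.
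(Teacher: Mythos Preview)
Your proof is correct. The paper does not actually prove Lemma~\ref{l:1}; it is quoted verbatim as \cite[Proposition~2.1(iii)]{Joca16} and used as a black box, so there is no in-paper argument to compare against. That said, the approach you outline is the standard one for this type of ``weak--strong'' closedness result: the algebraic identity
\[
\pair{a_n}{a_n^*}+\pair{b_n}{b_n^*}=\pair{a_n}{a_n^*+b_n^*}+\pair{b_n-a_n}{b_n^*}
\]
is precisely the device that resolves the only nontrivial limit (pairing two weakly convergent sequences), and your subsequent two-step use of maximal monotonicity---first by contradiction to place $(x,-y^*)$ in $\gra M_1$, then directly to place $(x,y^*)$ in $\gra M_2$---is clean and complete. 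The boundedness of $(a_n)_{n\in\NN}$ and $(b_n^*)_{n\in\NN}$ indeed follows from their weak convergence via the uniform boundedness principle, so the cross term vanishes as claimed.
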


\subsection{Recovering existing frameworks for monotone inclusions}
\label{sec:31}

In this section, we show that the existing results of
\cite{Sico03,Opti14,Rena97} discussed in the Introduction can be
recovered from Theorem~\ref{t:1}. As will be clear from the proofs,
more general versions of these results can also be derived at once
from Theorem~\ref{t:1}.
First, we derive from Theorem~\ref{t:1} the convergence of the
Bregman-based proximal point algorithm \eqref{e:5} studied in
\cite[Section~5.5]{Sico03}.

\begin{corollary}
\label{c:1}
Let $A\colon\XX\to 2^{\XX^*}$ be maximally monotone, let 
$f\in\Gamma_0(\XX)$ be a supercoercive Legendre function such that
$\emp\neq\zer A\subset\dom A\subset\IDD$ and $\nabla f$ 
is weakly sequentially continuous, and let 
$(\gamma_n)_{n\in\NN}$ be a
sequence in $\RPP$ such that $\inf_{n\in\NN}\gamma_n>0$.
Suppose that, for every bounded sequence $(y_n)_{n\in\NN}$
in $\IDD$,
\begin{equation}
\label{e:1811}
D_f(y_{n+1},y_n)\to 0\quad\Rightarrow\quad
\nabla f(y_{n+1})-\nabla f(y_n)\to 0.
\end{equation}
Take $x_0\in C$ and set $(\forall n\in\NN)$
$x_{n+1}=(\nabla f+\gamma_nA\big)^{-1}(\nabla f(x_n))$.
Then $(x_n)_{n\in\NN}$ converges weakly to a point in $\zer A$.
\end{corollary}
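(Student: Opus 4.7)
The plan is to deduce the result from Theorem~\ref{t:1} by specializing to $B\equiv 0$ and $f_n\equiv f$, with $\alpha=1$, $\eta_n\equiv 0$, and $\delta_1=\delta_2=\kappa=0$ in Problem~\ref{prob:1} and Algorithm~\ref{a:1}. With $B=0$, condition \eqref{e:1d} reduces to $0\leq 0$ and is vacuous, while $\mathscr{S}=\IDD\cap\zer A=\zer A\neq\emp$ by the standing hypothesis $\zer A\subset\dom A\subset\IDD$. The numerical requirements \ref{a:1a} and \ref{a:1b} in Algorithm~\ref{a:1} collapse to $\inf_{n\in\NN}\gamma_n>0$ and $D_{f_{n+1}}=D_{f_n}$, both immediate. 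For \ref{a:1c}, the strict monotonicity of $\nabla f$ on $C$ stems from the Legendre property of $f$, while the range inclusion $\nabla f(C)\subset\ran(\nabla f+\gamma_n A)$ is obtained, as in Example~\ref{ex:56}, via the Br\'ezis--Haraux theorem: supercoercivity of the Legendre function $f$ yields $\ran\nabla f=\XX^*$, and $\dom(\gamma_n A)=\dom A\subset\intdom f=\dom\nabla f$, so $\ran(\nabla f+\gamma_n A)=\XX^*$.

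I would then verify the hypotheses of Theorem~\ref{t:1} in turn. Hypothesis~\ref{t:1a}, namely the boundedness of $(x_n)_{n\in\NN}$, follows from Proposition~\ref{p:1}\ref{p:1vii}\ref{p:1viib} since $f$ is supercoercive. Hypothesis~\ref{t:1c} is fulfilled in case~\ref{t:1c2} by taking $g=f$: $\nabla g$ is strictly monotone on $C$ by the Legendre property, and the required identification $\nabla f_{k_n}(y_{k_n})=\nabla f(y_{k_n})\weakly\nabla f(y)=\nabla g(y)$ is precisely the assumed weak sequential continuity of $\nabla f$.

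For the focusing hypothesis~\ref{t:1d}, with $\kappa=0$ and $B=0$, Proposition~\ref{p:1}\ref{p:1iii} supplies $\sum_{n\in\NN}D_f(x_{n+1},x_n)<\pinf$, and in particular $D_f(x_{n+1},x_n)\to 0$. Since $(x_n)_{n\in\NN}$ is bounded, the hypothesis \eqref{e:1811} then yields $\nabla f(x_{n+1})-\nabla f(x_n)\to 0$. The update equation entails that $a_n^*:=\gamma_n^{-1}(\nabla f(x_n)-\nabla f(x_{n+1}))\in Ax_{n+1}$, and the lower bound $\inf_{n\in\NN}\gamma_n>0$ then forces $a_n^*\to 0$ in norm. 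Applying Lemma~\ref{l:1} with $M_1=A$, $M_2=0$, $a_n=b_n=x_{n+1}$, $a_n^*$ as above, and $b_n^*=0$, any weak cluster point of $(x_{n+1})_{n\in\NN}$ therefore lies in $\zer A=\zer(A+B)$, establishing~\ref{t:1d}.

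Hypothesis~\ref{t:1b} is then automatic in the present setting: combining \ref{t:1d} with items \ref{p:1i}--\ref{p:1vi} of Proposition~\ref{p:1} gives $\WC(x_n)_{n\in\NN}\subset\zer A$, and the standing assumption $\zer A\subset\dom A\subset\IDD$ forces $\WC(x_n)_{n\in\NN}\subset\intdom f$. The one step I expect to require genuine care is the range condition in~\ref{a:1c}, which couples supercoercivity, the Legendre property, and Br\'ezis--Haraux; every other check is a direct consequence of the standing assumptions and the general machinery of Section~\ref{sec:2}. Once all four hypotheses are in hand, Theorem~\ref{t:1} delivers the weak convergence of $(x_n)_{n\in\NN}$ to a point of $\mathscr{S}=\zer A$.
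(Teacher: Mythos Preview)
Your proposal is correct and follows essentially the same route as the paper: specialize Theorem~\ref{t:1} with $B=0$, $f_n\equiv f$, $\alpha=1$, $\kappa=\delta_1=\delta_2=0$, obtain boundedness from Proposition~\ref{p:1}\ref{p:1vii}\ref{p:1viib}, verify \ref{t:1c}\ref{t:1c2} with $g=f$ via weak sequential continuity of $\nabla f$, check focusing through \eqref{e:1811} and Lemma~\ref{l:1}, and deduce \ref{t:1b} from $\WC(x_n)_{n\in\NN}\subset\zer A\subset\intdom f$. The only deviation is in condition~\ref{a:1c} of Algorithm~\ref{a:1}: the paper invokes \cite[Theorem~3.13(iv)(d)]{Sico03} directly, whereas you argue (as in Example~\ref{ex:56}) via Br\'ezis--Haraux after observing that supercoercivity of the Legendre function $f$ forces $\ran\nabla f=\XX^*$; both are valid and yield the same conclusion.
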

\begin{proof}
We apply Theorem~\ref{t:1} with $B=0$, $\alpha=1$,
$\kappa=\delta_1=\delta_2=0$, and $(\forall n\in\NN)$ $f_n=f$.
First, \eqref{e:1d} together with
conditions~\ref{a:1a} and \ref{a:1b} in Algorithm~\ref{a:1}
are trivially fulfilled. On the other hand,
since $f$ is a Legendre function and $\dom A\subset\intdom f$,
condition~\ref{a:1c} in Algorithm~\ref{a:1}
follows from \cite[Theorem~3.13(iv)(d)]{Sico03}.
Next, condition~\ref{t:1a} in Theorem~\ref{t:1} follows from
Proposition~\ref{p:1}\ref{p:1vii}\ref{p:1viib}. Furthermore,
in view of the weak sequential continuity of $\nabla f$,
condition~\ref{t:1c}\ref{t:1c2} in Theorem~\ref{t:1}
is satisfied with $g=f$. Next, to show that the algorithm
is focusing, suppose that
$\sum_{n\in\NN}D_f(x_{n+1},x_n)<\pinf$
and take $x\in\WC(x_n)_{n\in\NN}$, say $x_{k_n}\weakly x$.
Since $(x_n)_{n\in\NN}$ is a bounded sequence in $\intdom f$,
we derive from \eqref{e:1811} that
$\nabla f(x_{n+1})-\nabla f(x_n)\to 0$.
In turn, since $\inf_{n\in\NN}\gamma_n>0$,
it follows that $\gamma_n^{-1}(\nabla f(x_{n+1})-\nabla f(x_n))
\to 0$. However, by construction,
$(\forall n\in\NN)$ $\gamma_{k_n-1}^{-1}
(\nabla f(x_{k_n-1})-\nabla f(x_{k_n}))\in
Ax_{k_n}$. Therefore, upon invoking Lemma~\ref{l:1} (with
$M_1=A$ and $M_2=0$),
we obtain $x\in\zer A$ and the algorithm is therefore focusing.
This also shows that $\WC(x_n)_{n\in\NN}\subset\zer
A\subset\intdom f$. Condition~\ref{t:1b} in Theorem~\ref{t:1}
is thus satisfied.
\end{proof}

The next application of Theorem~\ref{t:1} is a variable metric
version of the Hilbertian forward-backward method 
\eqref{e:4} established in \cite[Theorem~4.1]{Opti14}.

\begin{corollary}
\label{c:2}
Let $\XX$ be a real Hilbert space, let $A\colon\XX\to 2^\XX$
be maximally monotone, let $\alpha$ and $\beta$ be in $\RPP$,
and let $B\colon\XX\to\XX$ satisfy
\begin{equation}
\label{e:6512}
(\forall x\in\XX)(\forall y\in\XX)\quad
\scal{x-y}{Bx-By}\geq\beta\|Bx-By\|^2.
\end{equation}
Further, for every $n\in\NN$, let $U_n\colon\XX\to\XX$ be
a bounded linear operator which is $\alpha$-strongly monotone
and self-adjoint. Suppose that $\zer(A+B)\neq\emp$ and that 
there exists a summable sequence $(\eta_n)_{n\in\NN}$ in $\RP$
such that
\begin{equation}
\label{e:9854}
(\forall n\in\NN)(\forall x\in\XX)\quad
\scal{x}{U_{n+1}x}\leq(1+\eta_n)\scal{x}{U_nx}.
\end{equation}
Let $\varepsilon\in\left]0,2\beta\right[$
and let $(\gamma_n)_{n\in\NN}$ be a sequence in $\RPP$
such that $0<\inf_{n\in\NN}\gamma_n\leq\sup_{n\in\NN}\gamma_n
\leq(2\beta-\varepsilon)\alpha$. Define a sequence
$(x_n)_{n\in\NN}$ via the recursion
\begin{equation}
\label{e:8969}
x_0\in\dom A\quad\text{and}\quad
(\forall n\in\NN)\quad
x_{n+1}=(U_n+\gamma_nA)^{-1}(U_nx_n-\gamma_nBx_n).
\end{equation}
Then $(x_n)_{n\in\NN}$ converges weakly to a point in
$\zer(A+B)$.
\end{corollary}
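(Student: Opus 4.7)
The plan is to recover Corollary~\ref{c:2} by instantiating Theorem~\ref{t:1} with $f=\tfrac12\|\mute\|^2$ and $f_n=\tfrac12\scal{\mute}{U_n\mute}$. With this choice, $f$ is a supercoercive Legendre function on $\XX$, $\nabla f_n=U_n$, and $D_{f_n}(x,y)=\tfrac12\scal{x-y}{U_n(x-y)}\geq\tfrac{\alpha}{2}\|x-y\|^2=\alpha D_f(x,y)$, so $(f_n)_{n\in\NN}$ lies in $\BC_\alpha(f)$. Condition \eqref{e:1d} follows from Proposition~\ref{p:7}\ref{p:7iia} applied with $\nabla f=\Id$ (hence with strong monotonicity constant $1$) and with a slightly shrunk parameter $\varepsilon'\in\left]0,\varepsilon\right[$ (say $\varepsilon'=\varepsilon/2$); this yields $\delta_1=0$, $\delta_2=(2\beta-\varepsilon')/(2\beta)$, and $\kappa=1/(2\beta-\varepsilon')$. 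With this choice of $\varepsilon'$, the hypothesis $\sup_{n}\gamma_n\leq(2\beta-\varepsilon)\alpha$ forces $\sup_{n}\kappa\gamma_n<\alpha$, which delivers condition~\ref{a:1a} of Algorithm~\ref{a:1} (the last inequality there being trivial since $\delta_1=0$). Condition~\ref{a:1b} is immediate from \eqref{e:9854}, while \ref{a:1c} reduces to the surjectivity of $U_n+\gamma_n A$, which follows from Minty's theorem applied to the maximally monotone operator $U_n+\gamma_n A$ (strong monotonicity of $U_n$ ensures surjectivity).

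The next step is to verify the hypotheses of Theorem~\ref{t:1}. Boundedness of $(x_n)_{n\in\NN}$ (condition~\ref{t:1a}) follows from supercoercivity of $f$ via Proposition~\ref{p:1}\ref{p:1vii}\ref{p:1viib}, and \ref{t:1b} is trivial since $\intdom f=\XX$. For the focusing condition \ref{t:1d}, I would argue as follows: since $\kappa\gamma_n/\alpha$ is uniformly bounded away from $1$, assumption~(4) of \eqref{e:f} combined with the strong convexity estimate $D_{f_n}\geq(\alpha/2)\|\mute-\mute\|^2$ yields $\sum_n\|x_{n+1}-x_n\|^2<\pinf$; likewise $1-\delta_2=\varepsilon'/(2\beta)>0$ and \eqref{e:6512} give $\sum_n\|Bx_n-Bz\|^2<\pinf$, hence $Bx_n\to Bz$. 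Extracting a subsequence with $x_{k_n}\weakly x$, one has $x_{k_n+1}\weakly x$, $x_{k_n+1}^*:=\gamma_{k_n}^{-1}U_{k_n}(x_{k_n}-x_{k_n+1})-Bx_{k_n}\in Ax_{k_n+1}$, and boundedness of $U_{k_n}$ combined with $x_{k_n+1}-x_{k_n}\to 0$ and $Bx_{k_n+1}-Bx_{k_n}\to 0$ yields $x_{k_n+1}^*+Bx_{k_n+1}\to 0$. Lemma~\ref{l:1} (with $M_1=A$, $M_2=B$) then gives $x\in\zer(A+B)$.

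The main obstacle is condition~\ref{t:1c}. Since $\mathscr{S}=\zer(A+B)$ need not be a singleton, I would establish \ref{t:1c2} by first showing that $(U_n)_{n\in\NN}$ converges in the strong operator topology to an $\alpha$-strongly monotone bounded self-adjoint operator $U$. To this end, set $\mu_n=\prod_{k<n}(1+\eta_k)$ and $V_n=U_n/\mu_n$; inequality \eqref{e:9854} rewrites as $V_{n+1}\leq V_n$ in the Loewner order, and $V_n\geq(\alpha/\prod_{k\in\NN}(1+\eta_k))\Id$, so by Vigier's monotone convergence theorem for self-adjoint operators, $V_n\to V$ in the strong operator topology for some self-adjoint $V$. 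Since $\mu_n\to\mu\in\RPP$, this gives $U_n x\to Ux:=\mu Vx$ strongly for every $x$, with $U$ self-adjoint and $\alpha$-strongly monotone. Now take $g=\tfrac12\scal{\mute}{U\mute}$, so $\nabla g=U$ is strictly monotone on $\XX$. For any $y_{k_n}\weakly y$ in $C$ and any $v\in\XX$, self-adjointness yields $\scal{v}{U_{k_n}y_{k_n}}=\scal{U_{k_n}v}{y_{k_n}}$; strong convergence $U_{k_n}v\to Uv$ combined with weak convergence $y_{k_n}\weakly y$ gives $\scal{v}{U_{k_n}y_{k_n}}\to\scal{Uv}{y}=\scal{v}{Uy}$, hence $\nabla f_{k_n}(y_{k_n})=U_{k_n}y_{k_n}\weakly Uy=\nabla g(y)$. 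Theorem~\ref{t:1} then delivers weak convergence of $(x_n)_{n\in\NN}$ to a point of $\zer(A+B)$.
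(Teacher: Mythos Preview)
Your proposal is correct and follows essentially the same route as the paper: instantiate Theorem~\ref{t:1} with $f=\tfrac12\|\mute\|^2$ and $f_n=\tfrac12\scal{\mute}{U_n\mute}$, invoke Proposition~\ref{p:7}\ref{p:7ii}\ref{p:7iia} for \eqref{e:1d}, verify the focusing condition via cocoercivity plus Lemma~\ref{l:1}, and secure \ref{t:1c}\ref{t:1c2} by proving that $U_n\to U$ in the strong operator topology and setting $g=\tfrac12\scal{\mute}{U\mute}$. The only cosmetic differences are that you shrink $\varepsilon$ to $\varepsilon'=\varepsilon/2$ so as to guarantee the strict inequality $\sup_n\kappa\gamma_n<\alpha$ needed in the focusing step (a careful touch), and that you obtain the strong convergence of $(U_n)_{n\in\NN}$ via Vigier's monotone convergence theorem rather than the paper's citation of \cite[Lemma~2.3(i)]{Guad2012}.
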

\begin{proof}
Set $f=\|\mute\|^2/2$, $C=\dom A$, and
$\mathscr{S}=\zer(A+B)$. In addition, for every $n\in\NN$,
define $f_n\colon\XX\to\RR\colon x\mapsto\scal{x}{U_nx}/2$.
Let us apply Theorem~\ref{t:1} with
$\kappa=1/(2\beta-\varepsilon)$, $\delta_1=0$, and
$\delta_2=(2\beta-\varepsilon)/(2\beta)\in\zeroun$.
First, $f\in\Gamma_0(\XX)$ is a
supercoercive Legendre function with
$\dom f=\XX$ and, for every $n\in\NN$,
since $\nabla f_n=U_n$ is $\alpha$-strongly monotone,
$f_n\in\BC_\alpha(f)$.
Furthermore, it follows from
Proposition~\ref{p:7}\ref{p:7ii}\ref{p:7iia}
that \eqref{e:1d} is fulfilled. 
We also observe that condition~\ref{a:1a}
in Algorithm~\ref{a:1} is satisfied.
Next, by \eqref{e:9854} and the assumption that the operators
$(U_n)_{n\in\NN}$ are self-adjoint,
\begin{align}
(\forall n\in\NN)(\forall x\in\XX)(\forall y\in\XX)\quad
D_{f_{n+1}}(x,y)
&=\frac{1}{2}\scal{x-y}{U_{n+1}(x-y)} \nonumber\\
&\leq\frac{1+\eta_n}{2}\scal{x-y}{U_n(x-y)}
\nonumber\\
&=D_{f_n}(x,y)
\end{align}
and condition~\ref{a:1b} in Algorithm~\ref{a:1} therefore
holds. Now take $n\in\NN$. Since $\nabla f_n=U_n$ is
maximally monotone with $\dom\nabla f_n=\XX$ and $A$ is maximally
monotone, \cite[Corollary~25.5(i)]{Livre1} entails that 
$\nabla f_n+\gamma_nA$ is maximally monotone.
Thus, since $\nabla f_n+\gamma_n A$ is $\alpha$-strongly
monotone, \cite[Proposition~22.11(ii)]{Livre1} implies that
$\ran(\nabla f_n+\gamma_n A)=\XX$ and it follows that
condition~\ref{a:1c} in Algorithm~\ref{a:1} is satisfied.
Next, in view of
Proposition~\ref{p:1}\ref{p:1vii}\ref{p:1viib}, $(x_n)_{n\in\NN}$
is bounded, while $\WC(x_n)_{n\in\NN}\subset\XX=\intdom f$. 
Now set $\mu=\sup_{n\in\NN}\|U_n\|$.
For every $n\in\NN$, since it results from
\eqref{e:9854} and \cite[Fact~2.25(iii)]{Livre1} that
\begin{equation}
(\forall x\in\XX)\quad
\|x\|\leq 1\quad\Rightarrow\quad
\scal{x}{U_nx}\leq\bigg(\prod_{k\in\NN}(1+\eta_k)\bigg)\scal{x}{U_0x}
\leq\bigg(\prod_{k\in\NN}(1+\eta_k)\bigg)\|U_0\|,
\end{equation}
we derive from \cite[Fact~2.25(iii)]{Livre1} that
$\|U_n\|\leq\|U_0\|\prod_{k\in\NN}(1+\eta_k)$. Hence $\mu<\pinf$
and therefore, appealing to \cite[Lemma~2.3(i)]{Guad2012}, there
exists an $\alpha$-strongly monotone self-adjoint bounded linear
operator $U\colon\XX\to\XX$ such that
$(\forall w\in\XX)$ $U_nw\to Uw$. Define
$g\colon\XX\to\XX:x\mapsto\scal{x}{Ux}/2$. Then $\nabla g=U$
is strongly monotone (and thus strictly monotone). Furthermore,
given $(y_n)_{n\in\NN}$ in $C$ and $y\in\WC(y_n)_{n\in\NN}\cap C$,
say $y_{k_n}\weakly y$, we have
\begin{equation}
(\forall w\in\XX)\quad
\scal{w}{\nabla f_{k_n}(y_{k_n})}=\scal{U_{k_n}w}{y_{k_n}}
\to\scal{Uw}{y}=\scal{w}{Uy}=\scal{w}{\nabla g(y)}
\end{equation}
and thus $\nabla f_{k_n}(y_{k_n})\weakly\nabla g(y)$.
Therefore, condition~\ref{t:1c}\ref{t:1c2} in Theorem~\ref{t:1} is
satisfied. Let us now verify that \eqref{e:8969} is
focusing. Towards this goal,
take $z\in\mathscr{S}$ and
suppose that $\sum_{n\in\NN}(1-\delta_2)\scal{x_n-z}{Bx_n-Bz}
<\pinf$ and $\sum_{n\in\NN}(1-\kappa\gamma_n/\alpha)
D_{f_n}(x_{n+1},x_n)<\pinf$.
Since $\delta_2<1$ and $\sup_{n\in\NN}(\kappa\gamma_n)<\alpha$,
we infer from \eqref{e:6512} that
\begin{equation}
\label{e:3026}
\sum_{n\in\NN}\|Bx_{n}-Bz\|^2
\leq\frac{1}{\beta}\sum_{n\in\NN}\scal{x_n-z}{Bx_n-Bz}<\pinf
\end{equation}
and $\sum_{n\in\NN}\|x_{n+1}-x_n\|^2
=2\sum_{n\in\NN}D_f(x_{n+1},x_n)
\leq(2/\alpha)\sum_{n\in\NN}D_{f_n}(x_{n+1},x_n)<\pinf$.
It follows that
\begin{equation}
\label{e:5029}
\|U_n(x_{n+1}-x_n)\|\leq\mu\|x_{n+1}-x_n\|\to 0.
\end{equation}
Now take $x\in\WC(x_n)_{n\in\NN}$, say $x_{k_n}\weakly x$,
and set $(\forall n\in\NN)$
$x_{n+1}^*=\gamma_n^{-1}U_n(x_n-x_{n+1})-Bx_n$.
It results from \eqref{e:8969} that
$(x_{k_n+1},x_{k_n+1}^*)_{n\in\NN}$ lies in $\gra A$
and from \eqref{e:5029} that $x_{k_n+1}\weakly x$.
Moreover, \eqref{e:5029} yields $x_{k_n+1}^*+Bx_{k_n}\to 0$.
Altogether, Lemma~\ref{l:1}
(applied to the sequences $(x_{k_n+1},x_{k_n+1}^*)_{n\in\NN}$
in $\gra A$ and $(x_{k_n},Bx_{k_n})_{n\in\NN}$ in $\gra B$)
guarantees that $x\in\zer(A+B)$.
Consequently, Theorem~\ref{t:1} asserts that $(x_n)_{n\in\NN}$
converges weakly to a point in $\mathscr{S}$.
\end{proof}

\begin{example}
\label{c:0}
The classical forward-backward method is obtained
by setting $U_n\equiv\Id$ in Corollary~\ref{c:2}, which yields
\begin{equation}
\label{e:paul}
x_0\in\dom A\quad\text{and}\quad(\forall n\in\NN)\quad
x_{n+1}=(\Id+\gamma_nA)^{-1}(x_n-\gamma_nBx_n).
\end{equation}
The case when the proximal parameters $(\gamma_n)_{n\in\NN}$
are constant was first addressed in \cite{Merc79}.
\end{example}

We now turn to the Renaud--Cohen algorithm \eqref{e:8}
and recover \cite[Theorem~3.4]{Rena97}.

\begin{corollary}
Let $\XX$ be a real Hilbert space, let $A\colon\XX\to 2^\XX$
and $B\colon\XX\to\XX$ be maximally monotone,
and let $f\colon\XX\to\RR$ be convex and Fr\'echet differentiable.
Suppose that $\zer(A+B)\neq\emp$, that
$\nabla f$ is $1$-strongly monotone on $\dom A$
and Lipschitzian on bounded sets, and that there exists
$\beta\in\RPP$ such that
\begin{equation}
\label{e:1917}
\big(\forall(x,x^*)\in\gra(A+B)\big)\big(\forall(y,y^*)
\in\gra(A+B)\big)\quad\scal{x-y}{x^*-y^*}\geq\beta\|Bx-By\|^2.
\end{equation}
Let $\gamma\in\left]0,2\beta\right[$,
take $x_0\in\dom A$, and set $(\forall n\in\NN)$
$x_{n+1}=(\nabla f+\gamma A)^{-1}(\nabla f(x_n)-\gamma Bx_n)$.
Suppose, in addition, that $\nabla f$ is
weakly sequentially continuous. 
Then $(x_n)_{n\in\NN}$ converges weakly to a point in
$\zer(A+B)$.
\end{corollary}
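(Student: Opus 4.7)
The plan is to apply Theorem~\ref{t:1} with $f_n\equiv f$, $\gamma_n\equiv\gamma$, and $\alpha=1$, using Proposition~\ref{p:7}\ref{p:7iii} to secure \eqref{e:1d}. Since $\gamma<2\beta$, fix $\varepsilon\in\left]0,2\beta-\gamma\right[$ and set $\kappa=1/(2\beta-\varepsilon)$ and $\delta_1=\delta_2=(2\beta-\varepsilon)/(2\beta)\in\zeroun$; then $\kappa\gamma<1=\alpha$. The Algorithm~\ref{a:1} hypotheses then go through routinely: \ref{a:1a} is immediate, \ref{a:1b} holds with $\eta_n\equiv 0$, and for \ref{a:1c}, $\nabla f+\gamma A$ is maximally monotone via \cite[Corollary~25.5(i)]{Livre1} (as $\nabla f$ is maximally monotone with full domain) and $1$-strongly monotone, so \cite[Proposition~22.11(ii)]{Livre1} gives $\ran(\nabla f+\gamma A)=\XX$. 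For Theorem~\ref{t:1}, the inequality $D_f(z,x_n)\ge\tfrac12\|z-x_n\|^2$ (obtained exactly as in \eqref{e:4730} from the strong monotonicity of $\nabla f$ on $\dom A$) combined with Proposition~\ref{p:1}\ref{p:1i} gives~\ref{t:1a}; \ref{t:1b} is automatic since $\dom f=\XX$; and \ref{t:1c}\ref{t:1c2} holds with $g=f$ by the weak sequential continuity and the strict monotonicity of $\nabla f$ on $C=\dom A$.

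The focusing verification is the heart of the proof and the place where the non-cocoercive hypothesis~\eqref{e:1917} really enters. Set $x_{n+1}^*=\gamma^{-1}(\nabla f(x_n)-\nabla f(x_{n+1}))-Bx_n$, so that $x_{n+1}^*\in Ax_{n+1}$ by construction. The fourth focusing premise together with $D_f(x_{n+1},x_n)\ge\tfrac12\|x_{n+1}-x_n\|^2$ forces $x_{n+1}-x_n\to 0$; since $(x_n)_{n\in\NN}$ is bounded and $\nabla f$ is Lipschitzian on bounded sets, $\nabla f(x_n)-\nabla f(x_{n+1})\to 0$ and therefore $x_{n+1}^*+Bx_n\to 0$. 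Applying \eqref{e:1917} to $(x_{n+1},x_{n+1}^*+Bx_{n+1})\in\gra(A+B)$ and $(z,0)\in\gra(A+B)$ yields
\begin{equation*}
\beta\|Bx_{n+1}-Bz\|^2\le\scal{x_{n+1}-z}{x_{n+1}^*+Bz}+\scal{x_{n+1}-z}{Bx_{n+1}-Bz},
\end{equation*}
whose right-hand side is summable in $n$ by the second focusing premise and the re-indexed third; hence $Bx_n\to Bz$ strongly.

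To conclude, take $x\in\WC(x_n)_{n\in\NN}$ with $x_{k_n}\weakly x$; then $x_{k_n+1}\weakly x$, while $(x_{k_n+1},x_{k_n+1}^*)\in\gra A$, $(x_{k_n},Bx_{k_n})\in\gra B$, $x_{k_n+1}-x_{k_n}\to 0$, and $x_{k_n+1}^*+Bx_{k_n}\to 0$. Lemma~\ref{l:1} yields $x\in\zer(A+B)$, so the algorithm is focusing and Theorem~\ref{t:1} delivers the claim. The main obstacle is precisely the extraction of $Bx_n\to Bz$: because~\eqref{e:1917} is a joint cocoercivity statement for $A+B$ and not a cocoercivity statement for $B$ alone, one cannot read $\sum\|Bx_n-Bz\|^2<\pinf$ off a single focusing premise, but must combine three of them as above.
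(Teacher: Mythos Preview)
Your proof is correct and follows essentially the same route as the paper's: both apply Theorem~\ref{t:1} with $f_n\equiv f$, $\gamma_n\equiv\gamma$, $\alpha=1$, and the parameters $\kappa=1/(2\beta-\varepsilon)$, $\delta_1=\delta_2=(2\beta-\varepsilon)/(2\beta)$ furnished by Proposition~\ref{p:7}\ref{p:7iii}, and both hinge the focusing verification on combining the second and (re-indexed) third focusing premises with \eqref{e:1917} to force $\sum_{n\in\NN}\|Bx_{n+1}-Bz\|^2<\pinf$, after which Lemma~\ref{l:1} closes the argument. The only cosmetic difference is ordering: the paper first assembles the combined summable quantity \eqref{e:1564} and then reads off $Bx_n\to Bz$ and $x_{n+1}-x_n\to 0$, whereas you establish $x_{n+1}-x_n\to 0$ and $x_{n+1}^*+Bx_n\to 0$ first and then invoke \eqref{e:1917}; the ingredients and logic are identical.
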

\begin{proof}
Let $\varepsilon\in\left]0,2\beta\right[$ be such that
$\gamma<2\beta-\varepsilon$.
We apply Theorem~\ref{t:1} with $C=\dom A$, $\alpha=1$,
$\kappa=1/(2\beta-\varepsilon)$, $\delta_1=\delta_2=
(2\beta-\varepsilon)/(2\beta)\in\zeroun$, and 
$(\forall n\in\NN)$ $f_n=f$ and $\eta_n=0$.
Proposition~\ref{p:7}\ref{p:7iii} asserts that
\eqref{e:1d} is satisfied.
Furthermore, as shown in the proof of
Proposition~\ref{p:7}\ref{p:7iii},
\begin{equation}
\label{e:4412}
(\forall x\in\cdom A)(\forall y\in\cdom A)\quad
D_f(x,y)\geq\frac{1}{2}\|x-y\|^2.
\end{equation}
Next, note that conditions~\ref{a:1a} and \ref{a:1b} 
in Algorithm~\ref{a:1} are trivially
satisfied. Since $\nabla f+\gamma A$ is strongly
monotone and since, by \cite[Corollary~25.5(i)]{Livre1},
$\nabla f+\gamma A$ is maximally monotone,
it follows from \cite[Proposition~22.11(ii)]{Livre1} that
$\ran(\nabla f+\gamma A)=\XX$ and therefore that
condition~\ref{a:1c} in Algorithm~\ref{a:1} holds.
We observe that
condition~\ref{t:1b} in Theorem~\ref{t:1} is trivially satisfied
and that condition~\ref{t:1a} in Theorem~\ref{t:1} follows from
\eqref{e:4412} and Proposition~\ref{p:1}\ref{p:1i}.
Furthermore, since $\nabla f$ is weakly sequentially
continuous and $1$-strongly monotone on $C$,
condition~\ref{t:1c}\ref{t:1c2} in Theorem~\ref{t:1} is satisfied
with $g=f$. Now take $z\in\zer(A+B)$
and suppose that
$\sum_{n\in\NN}(1-\kappa\gamma)D_f(x_{n+1},x_n)<\pinf$,
$\sum_{n\in\NN}(1-\delta_2)\scal{x_n-z}{Bx_n-Bz}<\pinf$,
and $\sum_{n\in\NN}\scal{x_{n+1}-z}{\gamma^{-1}(\nabla
f(x_n)-\nabla f(x_{n+1}))-Bx_n+Bz}<\pinf$. Then, since
$\kappa\gamma<1$ and $\delta_2<1$, it follows that
\begin{equation}
\label{e:1563}
\displaystyle
\Sum_{n\in\NN}D_f(x_{n+1},x_n)<\pinf
\quad\text{and}\quad
\Sum_{n\in\NN}\scal{x_n-z}{Bx_n-Bz}<\pinf,
\end{equation}
and therefore that
\begin{equation}
\label{e:1564}
\displaystyle
\sum_{n\in\NN}\sscal{x_{n+1}-z}{\gamma^{-1}(\nabla
f(x_n)-\nabla f(x_{n+1}))-Bx_n+Bx_{n+1}}<\pinf.
\end{equation}
Since $(z,0)\in\gra(A+B)$ and since the sequence
$(x_{n+1},\gamma^{-1}(\nabla f(x_n)-\nabla f(x_{n+1}))
-Bx_n+Bx_{n+1})_{n\in\NN}$ lies in $\gra(A+B)$ by construction,
it follows from \eqref{e:1917} and \eqref{e:1564} that 
$\sum_{n\in\NN}\|Bx_n-Bz\|^2<\pinf$.
On the other hand, since $(x_n)_{n\in\NN}$ lies in $\dom A$ by
Proposition~\ref{p:1}, we deduce from \eqref{e:4412}
and \eqref{e:1563} that
$x_{n+1}-x_n\to 0$. In turn, it results from the Lipschitz
continuity of $\nabla f$ on the bounded set
$\{x_n\}_{n\in\NN}$ that $\nabla f(x_n)-\nabla f(x_{n+1})\to
0$. Now take $x\in\WC(x_n)_{n\in\NN}$, say $x_{k_n}\weakly x$,
and set $(\forall n\in\NN)$ $x_{n+1}^*
=\gamma^{-1}(\nabla f(x_n)-\nabla f(x_{n+1}))-Bx_n$.
Then $(x_{k_n+1},x_{k_n+1}^*)_{n\in\NN}$ lies in
$\gra A$. Furthermore, $x_{k_n+1}^*+Bx_{k_n}
=\gamma^{-1}(\nabla f(x_{k_n})-\nabla f(x_{k_n+1}))\to 0$
and, since $x_n-x_{n+1}\to 0$, $x_{k_n+1}\weakly x$.
Thus, applying Lemma~\ref{l:1} with the sequences
$(x_{k_n+1},x_{k_n+1}^*)_{n\in\NN}$ and
$(x_{k_n},Bx_{k_n})_{n\in\NN}$ yields $x\in\zer(A+B)$, and we
conclude that condition~\ref{t:1d} in Theorem~\ref{t:1} is
satisfied as well.
\end{proof}

\subsection{The finite-dimensional case}
\label{sec:32}

We discuss the finite-dimensional case, a setting in which the 
assumptions can be greatly simplified and the results presented
below are new. 

\begin{corollary}
\label{c:wu}
Let $(x_n)_{n\in\NN}$ be a sequence generated by
Algorithm~\ref{a:1}. In addition, suppose that the following
hold:
\begin{enumerate}[label={\rm[\alph*]}]
\item
\label{p:10a}
$\XX$ is finite-dimensional.
\item
\label{p:10b}
$f$ is essentially strictly convex and $\dom f^*$ is open.
\item
\label{p:10c}
$(\intdom f)\cap\cdom A\subset\intdom B$.
\item
\label{p:10d}
$\sup_{n\in\NN}(\kappa\gamma_n)<\alpha$.
\item
\label{p:10e}
There exists a function $g$ in $\Gamma_0(\XX)$ which is 
differentiable on $\intdom g\supset\intdom f$, with
$\nabla g$ strictly monotone on $C$, and such that,
for every sequence $(y_n)_{n\in\NN}$ in $C$ and every
sequential cluster point $y\in\intdom f$ of $(y_n)_{n\in\NN}$, 
$y_{k_n}\to y$ $\Rightarrow$
$\nabla f_{k_n}(y_{k_n})\to\nabla g(y)$.
\end{enumerate}
Then $(x_n)_{n\in\NN}$ converges to a point in $\mathscr{S}$.
\end{corollary}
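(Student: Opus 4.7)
The plan is to apply Theorem~\ref{t:1} and exploit the fact that, in a finite-dimensional space, weak and norm convergence coincide, so any weak convergence statement upgrades to the norm convergence asserted here. Thus I verify hypotheses \ref{t:1a}--\ref{t:1c} of Theorem~\ref{t:1} under assumptions \ref{p:10a}--\ref{p:10e}.

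Two hypotheses are essentially immediate: \ref{t:1a} follows from Proposition~\ref{p:1}\ref{p:1vii}\ref{p:1viid} by \ref{p:10a}--\ref{p:10b}; \ref{t:1b} follows from Proposition~\ref{p:2}\ref{p:2d} by \ref{p:10a}; and hypothesis~\ref{t:1c}\ref{t:1c2} holds with the $g$ of \ref{p:10e}, since in finite dimensions the weak convergence $y_{k_n}\weakly y$ in Theorem~\ref{t:1}\ref{t:1c}\ref{t:1c2} coincides with norm convergence and the restriction $y\in\WC(y_n)_{n\in\NN}\cap C$ forces $y\in C\subset\intdom f$, to which \ref{p:10e} applies.

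The core step is showing that Algorithm~\ref{a:1} is focusing. Fix $z\in\mathscr{S}$, assume the four summability conditions in Definition~\ref{d:f}, and let $x\in\WC(x_n)_{n\in\NN}$, say $x_{k_n}\to x$. Hypothesis~\ref{p:10d} yields $\inf_{n\in\NN}(1-\kappa\gamma_n/\alpha)>0$, whence $D_{f_n}(x_{n+1},x_n)\to 0$ and, since $(f_n)_{n\in\NN}\subset\BC_\alpha(f)$, also $D_f(x_{n+1},x_n)\to 0$. By Proposition~\ref{p:2}\ref{p:2d}, $x\in\intdom f$; and since $(x_n)_{n\in\NN}\subset\dom A$, $x\in\cdom A$, so \ref{p:10c} gives $x\in\intdom B$. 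A key technical step is to deduce from $D_f(x_{k_n+1},x_{k_n})\to 0$, the boundedness of $(x_{k_n+1})_{n\in\NN}$, and $x_{k_n}\to x\in\intdom f$ that $x_{k_n+1}\to x$; this rests on standard Legendre machinery in finite dimensions (essential smoothness forces $\|\nabla f(\mute)\|\to\pinf$ on $\bdry\dom f$, ruling out boundary cluster points of $(x_{k_n+1})_{n\in\NN}$, and essential strict convexity then identifies the interior cluster point with $x$). Granted this, \ref{p:10e} applied to $(x_n)_{n\in\NN}$ and to the shift $(x_{n+1})_{n\in\NN}$ gives $\nabla f_{k_n}(x_{k_n})\to\nabla g(x)$ and $\nabla f_{k_n}(x_{k_n+1})\to\nabla g(x)$. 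Setting
\begin{equation}
x_{k_n+1}^{*}=\gamma_{k_n}^{-1}\big(\nabla f_{k_n}(x_{k_n})-\nabla f_{k_n}(x_{k_n+1})\big)-Bx_{k_n},
\end{equation}
one has $(x_{k_n+1},x_{k_n+1}^{*})\in\gra A$, while $B$, being maximally monotone and single-valued on $\intdom B$, is continuous on $\intdom B$ in finite dimensions, so $Bx_{k_n}\to Bx$. Combined with $\inf_{n\in\NN}\gamma_n>0$ from Algorithm~\ref{a:1}\ref{a:1a}, this gives $x_{k_n+1}^{*}+Bx_{k_n}\to 0$ and $x_{k_n+1}-x_{k_n}\to 0$, so Lemma~\ref{l:1} applied to $(x_{k_n+1},x_{k_n+1}^{*})_{n\in\NN}\subset\gra A$ and $(x_{k_n},Bx_{k_n})_{n\in\NN}\subset\gra B$ yields $x\in\zer(A+B)$.

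The main obstacle is the Bregman-to-norm implication $D_f(x_{k_n+1},x_{k_n})\to 0$ with $x_{k_n}\to x\in\intdom f$ $\Rightarrow$ $x_{k_n+1}\to x$: one must carefully use essential smoothness, essential strict convexity, and openness of $\dom f^{*}$ simultaneously to preclude subsequential limits of $(x_{k_n+1})_{n\in\NN}$ on $\bdry\dom f$ and to pin down the unique interior candidate as $x$. The remaining ingredients (continuity of $B$ on $\intdom B$, Lemma~\ref{l:1}, and condition~\ref{p:10e}) then assemble mechanically into the focusing property, after which Theorem~\ref{t:1} delivers weak—hence norm—convergence to a point in $\mathscr{S}$.
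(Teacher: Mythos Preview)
Your overall plan coincides with the paper's: both verify conditions~\ref{t:1a}--\ref{t:1c} of Theorem~\ref{t:1}, with the focusing property as the only nontrivial item, and both then feed the cluster point into Lemma~\ref{l:1} via the pair $(x_{k_n+1},x_{k_n+1}^{*})\in\gra A$ and $(x_{k_n},Bx_{k_n})\in\gra B$, using continuity of $B$ on $\intdom B$ (the paper cites \cite{Rock69}) and condition~\ref{p:10e} for the gradients.

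The divergence is precisely at the step you flag as the main obstacle, namely showing $x_{k_n+1}\to x$ from $D_f(x_{k_n+1},x_{k_n})\to 0$ and $x_{k_n}\to x\in\intdom f$. The paper does \emph{not} argue directly on the primal side. Instead it uses the first focusing hypothesis---convergence of $(D_{f_n}(z,x_n))_{n\in\NN}$, which you never exploit---to deduce that $(D_{f^*}(\nabla f(x_n),\nabla f(z)))_{n\in\NN}$ is bounded, hence $(\nabla f(x_{k_n+1}))_{n\in\NN}$ is bounded by coercivity of $D_{f^*}(\mute,\nabla f(z))$. It then extracts a \emph{further} subsequence $(l_{k_n})_{n\in\NN}$ along which $\nabla f(x_{l_{k_n}+1})\to x^*\in\dom f^*=\intdom f^*$, identifies $x^*=\nabla f(x)$ via $D_{f^*}(\nabla f(x_{l_{k_n}}),\nabla f(x_{l_{k_n}+1}))=D_f(x_{l_{k_n}+1},x_{l_{k_n}})\to 0$, and inverts through $\nabla f^*$ to get $x_{l_{k_n}+1}\to x$; Lemma~\ref{l:1} is applied along this further subsequence only.

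Your sketch, by contrast, tries to rule out boundary cluster points of $(x_{k_n+1})_{n\in\NN}$ directly by invoking $\|\nabla f(\mute)\|\to\pinf$ near $\bdry\dom f$. As written this is incomplete: that blow-up gives a contradiction only once you know $(\nabla f(x_{k_n+1}))_{n\in\NN}$ is bounded, and you never establish that---it is exactly what the paper's dual argument supplies via the unused focusing hypothesis on $(D_{f_n}(z,x_n))_{n\in\NN}$. Your sketch can be repaired either by importing that dual boundedness step, or by a different primal argument (pass to a subsequential limit $v$, observe $f(v)\leq\lim f(x_{m_n+1})=f(x)+\pair{v-x}{\nabla f(x)}$ so equality holds, hence $\nabla f(x)\in\partial f(v)$, forcing $v\in\dom\partial f=\intdom f$ and then $v=x$). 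Either way the missing ingredient should be named explicitly; apart from this, the proposal matches the paper.
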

\begin{proof}
It follows from
Proposition~\ref{p:1}\ref{p:1vii}\ref{p:1viid} that
$(x_n)_{n\in\NN}$ is bounded and from
Proposition~\ref{p:2}\ref{p:2d} that
$\WC(x_n)_{n\in\NN}\subset\intdom f$.
In view of Theorem~\ref{t:1}, it remains to show that
Algorithm~\ref{a:1} is focusing.
Towards this goal, let $z\in\mathscr{S}$,
and suppose that $(D_{f_n}(z,x_n))_{n\in\NN}$
converges and $\sum_{n\in\NN}
(1-\kappa\gamma_n/\alpha)D_{f_n}(x_{n+1},x_n)<\pinf$,
and let $x$ be a sequential cluster point of 
$(x_n)_{n\in\NN}$, say $x_{k_n}\to x$.
Using \ref{p:10d} and the fact that
$(f_n)_{n\in\NN}$ lies in $\BC_\alpha(f)$, we obtain
\begin{equation}
\label{e:3037}
\big(D_f(z,x_n)\big)_{n\in\NN}\;\text{is bounded}
\quad\text{and}\quad
\sum_{n\in\NN}D_{f_n}(x_{n+1},x_n)<\pinf.
\end{equation}
Since $(x_{k_n})_{n\in\NN}$ lies in $\intdom f$,
\cite[Theorem~3.8(ii)]{BB97} and \eqref{e:3037} imply that
\begin{equation}
\label{e:8440}
x\in\intdom f
\end{equation}
and \cite[Theorem~5.10]{Ccm01} thus yields
\begin{equation}
\label{e:2697}
\nabla f(x_{k_n})\to\nabla f(x)\in\intdom f^*.
\end{equation}
Next, it results from \ref{p:10b},
\cite[Lemma~7.3(vii)]{Ccm01}, and \eqref{e:3037} that
\begin{equation}
\label{e:0285}
\big(D_{f^*}(\nabla f(x_n),\nabla f(z))\big)_{n\in\NN}
=\big(D_f(z,x_n)\big)_{n\in\NN}\;\text{is bounded}.
\end{equation}
Therefore, since $\nabla f(z)\in\intdom f^*$ 
\cite[Theorem~5.10]{Ccm01} and since $f^*$ is a Legendre
function \cite[Corollary~5.5]{Ccm01}, it results from
\cite[Lemma~7.3(v)]{Ccm01} that $(\nabla f(x_{k_n+1}))_{n\in\NN}$ 
is bounded. In turn, there exists a
strictly increasing sequence $(l_{k_n})_{n\in\NN}$ in
$\NN$ and a point $x^*\in\XX^*$ such that
\begin{equation}
\label{e:8291}
\nabla f(x_{l_{k_n}+1})\to x^*.
\end{equation}
By lower semicontinuity of $D_{f^*}(\mute,\nabla
f(z))$ and \eqref{e:0285}, $x^*\in\dom f^*$. On the other
hand, appealing to \cite[Lemma~7.3(vii)]{Ccm01} and
\eqref{e:3037}, we obtain
\begin{equation}
0\leq D_{f^*}\big(\nabla f(x_{l_{k_n}}),
\nabla f(x_{l_{k_n}+1})\big)
=D_f\big(x_{l_{k_n}+1},x_{l_{k_n}}\big)
\leq\frac{1}{\alpha}D_{f_{l_{k_n}}}
\big(x_{l_{k_n}+1},x_{l_{k_n}}\big)
\to 0.
\end{equation}
Thus, since $(\nabla f(x_n))_{n\in\NN}$ lies in $\intdom f^*$
by virtue of Proposition~\ref{p:1} and
\cite[Theorem~5.10]{Ccm01}, we derive from
\cite[Theorem~3.9(iii)]{BB97}, \eqref{e:2697}, and \eqref{e:8291}
that $x^*=\nabla f(x)$ and, hence, from \eqref{e:8291} that
$\nabla f(x_{l_{k_n}+1})\to\nabla f(x)$.
It thus follows from \cite[Theorem~5.10]{Ccm01} that
$x_{l_{k_n}+1}\to x$. In turn, by using respectively \ref{p:10e}
with the sequences $(x_n)_{n\in\NN}$ and $(x_{n+1})_{n\in\NN}$,
we get $\nabla f_{l_{k_n}}(x_{l_{k_n}})\to\nabla g(x)$ and
$\nabla f_{l_{k_n}}(x_{l_{k_n}+1})\to\nabla g(x)$.
Now set $(\forall n\in\NN)$ $x_{n+1}^*=\gamma_n^{-1}
(\nabla f_n(x_n)-\nabla f_n(x_{n+1}))-Bx_n$.
Then, by construction of $(x_n)_{n\in\NN}$,
$(\forall n\in\NN)$ $(x_{n+1},x_{n+1}^*)\in\gra A$.
In addition, since $\inf_{n\in\NN}\gamma_n>0$
and $\nabla f_{l_{k_n}}(x_{l_{k_n}})
-\nabla f_{l_{k_n}}(x_{l_{k_n}+1})\to\nabla g(x)-\nabla g(x)=0$, 
we deduce that $x_{l_{k_n}+1}^*+Bx_{l_{k_n}}\to 0$.
On the other hand, since $(x_n)_{n\in\NN}$ lies in $\dom A$
and $x_{k_n}\to x$, it follows that $x\in\cdom A$
and therefore, by \eqref{e:8440} and \ref{p:10c}, that
$x\in\intdom B$. Hence, using \cite[Corollary~1.1]{Rock69},
we obtain $Bx_{l_{k_n}}\to Bx$.
Altogether, Lemma~\ref{l:1} (applied to the sequence
$(x_{l_{k_n}+1},x_{l_{k_n}+1}^*)_{n\in\NN}$ in $\gra A$
and the sequence $(x_{l_{k_n}},Bx_{l_{k_n}})_{n\in\NN}$ in
$\gra B$) asserts that $x\in\zer(A+B)$. 
In view of Theorem~\ref{t:1}, we conclude that $(x_n)_{n\in\NN}$
converges to a point in $\mathscr{S}$.
\end{proof}

\subsection{Forward-backward splitting for convex minimization}
\label{sec:mini}

In this section, we study the convergence of \eqref{e:7}. Our
results improve on and complement those of \cite{Nguy17}. 

\begin{problem}
\label{prob:2}
Let $\varphi\in\Gamma_0(\XX)$, let $\psi\in\Gamma_0(\XX)$,
and let $f\in\Gamma_0(\XX)$ be essentially smooth.
Set $C=(\intdom f)\cap\dom\partial\varphi$ 
and $\mathscr{S}=(\intdom f)\cap\Argmin(\varphi+\psi)$.
Suppose that $\varphi+\psi$ is coercive,
$\emp\neq C\subset\intdom\psi$, $\mathscr{S}\neq\emp$, 
$\psi$ is G\^ateaux differentiable on $\intdom\psi$, and
there exists $\kappa\in\RPP$ such that
\begin{equation}
\label{e:2d}
(\forall x\in C)(\forall y\in C)\quad
D_{\psi}(x,y)\leq\kappa D_{f}(x,y).
\end{equation}
The objective is to find a point in $\mathscr{S}$.
\end{problem}

In the context of Problem~\ref{prob:2}, given 
$\gamma\in\RPP$ and $g\in\BC_{\alpha}(f)$, we define
$\prox^{g}_{\gamma\varphi}=(\nabla g+\gamma\partial\varphi)^{-1}$.

\begin{algorithm}
\label{a:2}
Consider the setting of Problem~\ref{prob:2}.
Let $\alpha\in\RPP$,
let $(\gamma_n)_{n\in\NN}$ be in $\RPP$, and
let $(f_n)_{n\in\NN}$ be in $\BC_\alpha(f)$.
Suppose that the following hold:
\begin{enumerate}[label={\rm[\alph*]}]
\item
\label{a:2a}
There exists $\varepsilon\in\zeroun$
such that
$0<\inf_{n\in\NN}\gamma_n\leq\sup_{n\in\NN}\gamma_n
\leq\alpha(1-\varepsilon)/\kappa$.
\item
\label{a:2b}
There exists a summable sequence $(\eta_n)_{n\in\NN}$ in $\RP$
such that $(\forall n\in\NN)$
$D_{f_{n+1}}\leq (1+\eta_n)D_{f_n}$.
\item
\label{a:2c}
For every $n\in\NN$, $\intdom f_n=\dom\partial f_n$
and $\nabla f_n$ is strictly monotone on $C$.
\end{enumerate}
Take $x_0\in C$ and set $(\forall n\in\NN)$
$x_{n+1}=\prox^{f_n}_{\gamma_n\varphi}(\nabla f_n(x_n)-\gamma_n
\nabla\psi(x_n))$.
\end{algorithm}

\begin{theorem}
\label{t:3}
Let $(x_n)_{n\in\NN}$ be a sequence generated by 
Algorithm~\ref{a:2} and suppose that the following hold:
\begin{enumerate}[label={\rm[\alph*]}]
\item
$\WC(x_n)_{n\in\NN}\subset\intdom f$.
\item
One of the following is satisfied:
\begin{enumerate}[label={\rm\arabic*/}]
\item
\label{t:3b1}
$\mathscr{S}$ is a singleton.
\item
\label{t:3b2}
There exists a function $g$ in $\Gamma_0(\XX)$ which is 
G\^ateaux differentiable on $\intdom g\supset C$, with
$\nabla g$ strictly monotone on $C$, and such that,
for every sequence $(y_n)_{n\in\NN}$ in $C$ and every
$y\in\WC(y_n)_{n\in\NN}\cap C$, 
$y_{k_n}\weakly y$ $\Rightarrow$
$\nabla f_{k_n}(y_{k_n})\weakly\nabla g(y)$.
\end{enumerate}
\end{enumerate}
Then the following hold:
\begin{enumerate}
\item
\label{t:3i}
$(x_n)_{n\in\NN}$ converges weakly to a point in
$\mathscr{S}$.
\item
\label{t:3ii-}
$(x_n)_{n\in\NN}$ is a monotone minimizing sequence:
$\varphi(x_n)+\psi(x_n)\downarrow\min(\varphi+\psi)(\XX)$.
\item
\label{t:3ii}
$\sum_{n\in\NN}((\varphi+\psi)(x_n)-\min(\varphi+\psi)(\XX))<\pinf$
and $(\varphi+\psi)(x_n)-\min(\varphi+\psi)(\XX)=o(1/n)$.
\item
\label{t:3iii}
$\sum_{n\in\NN}n
(D_{f_n}(x_{n+1},x_n)+D_{f_n}(x_n,x_{n+1}))<\pinf$.
\end{enumerate}
\end{theorem}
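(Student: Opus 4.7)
The strategy is to invoke Theorem~\ref{t:1} with $A=\partial\varphi$, $B=\nabla\psi$, $\delta_1=0$, $\delta_2=1$, and $\kappa$ as in \eqref{e:2d}, and then to extract the minimization-specific rates \ref{t:3ii-}--\ref{t:3iii} from a single sharpened descent inequality. Proposition~\ref{p:7}\ref{p:7v} shows that \eqref{e:2d} implies \eqref{e:1d}. Conditions \ref{a:1a}--\ref{a:1b} of Algorithm~\ref{a:1} translate directly from \ref{a:2a}--\ref{a:2b}, and condition~\ref{a:1c} follows from the strict monotonicity of $\nabla f_n$ on $C$ (in \ref{a:2c}) together with the fact that $\nabla f_n+\gamma_n\partial\varphi=\partial(f_n+\gamma_n\varphi)$ is maximally monotone with the required range property under the Legendre and interior hypotheses of Problem~\ref{prob:2}.

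The engine of the argument is the descent inequality. By the update rule, $\gamma_n^{-1}(\nabla f_n(x_n)-\nabla f_n(x_{n+1}))-\nabla\psi(x_n)\in\partial\varphi(x_{n+1})$. Combining the subgradient inequality for $\varphi$ at $x_{n+1}$ against an arbitrary $z\in\dom\varphi$, the three-point identity for $D_{f_n}$, the subgradient inequality for $\psi$ at $x_n$, and the estimate $D_\psi(x_{n+1},x_n)\leq(\kappa/\alpha)D_{f_n}(x_{n+1},x_n)$ (from \eqref{e:2d} and $f_n\in\BC_\alpha(f)$), I obtain
\[
\gamma_n\big((\varphi+\psi)(x_{n+1})-(\varphi+\psi)(z)\big)\leq D_{f_n}(z,x_n)-D_{f_n}(z,x_{n+1})-(1-\kappa\gamma_n/\alpha)D_{f_n}(x_{n+1},x_n).
\]
Setting $z=x_n$ and using $1-\kappa\gamma_n/\alpha\geq\varepsilon>0$ from \ref{a:2a} gives the monotone decrease in \ref{t:3ii-} as well as the telescoping bound $D_{f_n}(x_n,x_{n+1})+\varepsilon D_{f_n}(x_{n+1},x_n)\leq(\sup_{n\in\NN}\gamma_n)(u_n-u_{n+1})$, where $u_n=(\varphi+\psi)(x_n)-\min(\varphi+\psi)(\XX)\geq 0$. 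For \ref{t:3ii}, taking $z\in\mathscr{S}$ and absorbing the $(1+\eta_n)$ factor via \ref{a:2b} together with Proposition~\ref{p:1}\ref{p:1i} (which makes $(D_{f_n}(z,x_n))_{n\in\NN}$ bounded), a standard telescoping argument yields $\sum_n u_n<\pinf$; since $(u_n)_{n\in\NN}$ is nonnegative and decreasing, $u_n=o(1/n)$. For \ref{t:3iii}, multiplying the telescoping bound by $n$, summing, and applying Abel summation together with $nu_n\to 0$ and $\sum_n u_n<\pinf$ delivers the weighted summability.

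For \ref{t:3i}, the four hypotheses of Theorem~\ref{t:1} are verified as follows: boundedness (\ref{t:1a}) follows from \ref{t:3ii-} and the coercivity of $\varphi+\psi$; \ref{t:1b} is the standing assumption on $\WC(x_n)_{n\in\NN}$; and \ref{t:1c} is assumption (b). To check the focusing condition \ref{t:1d}, note that $u_n\to 0$ (established above from the descent inequality, independently of the premises of Definition~\ref{d:f}) and the weak lower semicontinuity of $\varphi+\psi$ force every weak cluster point $x\in\WC(x_n)_{n\in\NN}\subset\intdom f$ to satisfy $(\varphi+\psi)(x)=\min(\varphi+\psi)(\XX)$; then $0\in\partial(\varphi+\psi)(x)=\partial\varphi(x)+\partial\psi(x)$ by the Moreau--Rockafellar sum rule (valid because $C\subset\dom\varphi\cap\intdom\psi$ is nonempty), which forces $x\in\dom\partial\varphi\cap\intdom\psi$, where $\partial\psi(x)=\{\nabla\psi(x)\}$, so $x\in\zer(\partial\varphi+\nabla\psi)$. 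The principal obstacle will be the careful propagation of the $(1+\eta_n)$ perturbation through the various telescoping estimates and the verification that the subdifferential sum identity does identify the weak cluster points of $(x_n)_{n\in\NN}$ with elements of $\zer(\partial\varphi+\nabla\psi)$ rather than merely with minimizers of $\varphi+\psi$.
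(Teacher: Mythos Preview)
Your overall strategy matches the paper's almost exactly: the same instantiation of Theorem~\ref{t:1} with $\delta_1=0$, $\delta_2=1$, the same three-point descent inequality~\eqref{e:7610}, the same focusing argument via weak lower semicontinuity of $\varphi+\psi$, and the same Abel-summation route to \ref{t:3ii}--\ref{t:3iii}. Two points, however, deserve attention.

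\medskip

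\textbf{A genuine gap in verifying condition~\ref{a:1c}.} You write that the range inclusion follows from ``the Legendre and interior hypotheses of Problem~\ref{prob:2}''. Problem~\ref{prob:2} does \emph{not} assume $f$ (or the $f_n$) is Legendre, nor cofinite, and those hypotheses alone do not give $(\nabla f_n-\gamma_n\nabla\psi)(C)\subset\ran(\nabla f_n+\gamma_n\partial\varphi)$. The paper earns this inclusion through a nontrivial argument: fixing $x\in C$, it shows that $A_n=\nabla f_n+\gamma_n\partial\varphi-\nabla f_n(x)+\gamma_n\nabla\psi(x)$ is maximally monotone, then uses the \emph{coercivity of $\varphi+\psi$} (combined with \eqref{e:2d} via \eqref{e:8523}) and Rockafellar's surjectivity criterion \cite[Proposition~2]{Rock70} to produce a zero of $A_n(\mute+x)$. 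This is precisely where the coercivity hypothesis in Problem~\ref{prob:2} is consumed, and it is what allows the paper (see Remark~\ref{r:12}\ref{r:12i}) to dispense with the cofiniteness assumption of \cite{Nguy17}. Your sketch misses this idea entirely.

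\medskip

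\textbf{A minor slip.} You set $B=\nabla\psi$, but Problem~\ref{prob:1} requires $B$ to be maximally monotone; the paper sets $B=\partial\psi$ and then observes that $B=\nabla\psi$ on $\intdom\psi\supset C$. With that choice, $\zer(A+B)=\zer\partial(\varphi+\psi)=\Argmin(\varphi+\psi)$ directly (after the sum rule), so the focusing verification does not require the extra step of showing a weak cluster point lands in $\intdom\psi$. Your detour through ``which forces $x\in\dom\partial\varphi\cap\intdom\psi$'' is salvageable (since $x\in\intdom f$ and $x\in\dom\partial\varphi$ give $x\in C\subset\intdom\psi$), but it is cleaner to take $B=\partial\psi$ from the start.
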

\begin{proof}
\ref{t:3i}:
We shall derive this result from Theorem~\ref{t:1} with
$A=\partial\varphi$, $B=\partial\psi$, $\delta_1=0$,
and $\delta_2=1$.
First, appealing to \cite[Theorem~2.4.4(i)]{Zali02},
$B$ is single-valued on $\intdom B=\intdom\psi$ and $B=\nabla\psi$
on $\intdom B$. Next, set $\theta=\varphi+\psi$.
Since $\emp\neq(\intdom f)\cap\dom\partial
\varphi\subset\intdom\psi$, we have
$\dom\varphi\cap\intdom\psi\neq\emp$. Hence,
\cite[Theorem~4.1.19]{Borw10} yields $A+B=\partial\theta$.
Therefore, $\Argmin\theta=\zer\partial\theta=\zer(A+B)$
and $\mathscr{S}=(\intdom f)\cap\zer(A+B)$.
Next, in view of Proposition~\ref{p:7}\ref{p:7v},
\eqref{e:1d} is fulfilled.
On the other hand, conditions~\ref{a:1a} and \ref{a:1b}
in Algorithm~\ref{a:1} are trivially satisfied.
To verify condition~\ref{a:1c} in Algorithm~\ref{a:1},
it suffices to show that,
for every $n\in\NN$, $(\nabla f_n-\gamma_nB)(C)
\subset\ran(\nabla f_n+\gamma_n A)$, i.e.,
since $C\subset\intdom B$ and $B=\nabla\psi$ on $\intdom B$,
that $(\nabla f_n-\gamma_n\nabla\psi)(C)
\subset\ran(\nabla f_n+\gamma_n A)$. To do so,
fix temporarily $n\in\NN$, let $x\in C$, and set
\begin{equation}
A_n=\nabla f_n+\gamma_nA-\nabla f_n(x)+\gamma_n\nabla\psi(x).
\end{equation}
Then, since $\dom\partial f_n\cap\dom A=(\intdom f_n)\cap\dom A
=(\intdom f)\cap\dom A\neq\emp$ by condition~\ref{a:2c} in
Algorithm~\ref{a:2}, it results from
\cite[Proposition~3.12]{Sico03} that $A_n$ is maximally
monotone. Next, we deduce from condition~\ref{a:2a} in
Algorithm~\ref{a:2} and \eqref{e:2d} that
\begin{equation}
\label{e:2604}
(\forall u\in C)(\forall v\in C)\quad
\gamma_nD_\psi(u,v)\leq\alpha(1-\varepsilon)D_\psi(u,v)/\kappa
\leq\alpha(1-\varepsilon)D_f(u,v)
\leq(1-\varepsilon)D_{f_n}(u,v).
\end{equation}
In turn,
\begin{align}
(\forall u\in C)(\forall v\in C)\quad
\gamma_n\pair{u-v}{\nabla\psi(u)-\nabla\psi(v)}
&=\gamma_n\big(D_\psi(u,v)+D_\psi(v,u)\big)
\nonumber\\
&\leq(1-\varepsilon)\big(D_f(u,v)+D_f(v,u)\big)
\nonumber\\
&=(1-\varepsilon)\pair{u-v}{\nabla f_n(u)-\nabla f_n(v)}.
\label{e:8523}
\end{align}
However, by coercivity of $\theta$, there exists
$\rho\in\RPP$ such that
\begin{equation}
\label{e:1557}
(\forall y\in\XX)\quad
\|y\|\geq\rho\quad\Rightarrow\quad
\inf\pair{y}{(A+B)(y+x)}=\inf\pair{y}{\partial\theta(y+x)}
\geq\theta(y+x)-\theta(x)\geq 0.
\end{equation}
Now suppose that $(y,y^*)\in\gra A_n(\mute+x)$ satisfies
$\|y\|\geq\rho$. Then $y+x\in\dom\nabla f_n\cap\dom A
=(\intdom f_n)\cap\dom A=C$ and
$y^*-\nabla f_n(y+x)+\gamma_n\nabla\psi(y+x)
+\nabla f_n(x) -\gamma_n\nabla\psi(x)\in\gamma_n(A+B)(y+x)$.
Thus, it follows from \eqref{e:1557} and \eqref{e:8523} that
\begin{equation}
0\leq\pair{y}{y^*}-\Pair{(y+x)-x}{(\nabla
f_n-\gamma_n\nabla\psi)(y+x)-(\nabla
f_n-\gamma_n\nabla\psi)(x)}
\leq\pair{y}{y^*}.
\end{equation}
Therefore, in view of \cite[Proposition~2]{Rock70} and
the maximal monotonicity of $A_n(\mute+x)$,
there exists $\overline{y}\in\XX$ such that
$0\in A_n(\overline{y}+x)$.
Hence $(\nabla f_n-\gamma_n\nabla\psi)(x)
\in\nabla f_n(\overline{y}+x)+\gamma_nA(\overline{y}+x)
\subset\ran(\nabla f_n+\gamma_n A)$, as desired.
Since $(x_{n+1},\gamma_n^{-1}(
\nabla f_n(x_n)-\nabla f_n(x_{n+1}))-\nabla\psi(x_n))$ 
lies in $\gra\partial\varphi$ by construction,
we derive from \cite[Proposition~2.3(ii)]{Sico03} that
\begin{align}
\label{e:9316}
(\forall x\in C)\quad
\varphi(x)
&\geq\varphi(x_{n+1})-\pair{x-x_{n+1}}{\nabla\psi(x_n)}
+\gamma_n^{-1}\pair{x-x_{n+1}}
{\nabla f_n(x_n)-\nabla f_n(x_{n+1})}
\nonumber\\
&\geq\varphi(x_{n+1})-\pair{x-x_{n+1}}{\nabla\psi(x_n)}
\nonumber\\
&\quad\;+\gamma_n^{-1}\big(D_{f_n}(x,x_{n+1})+D_{f_n}(x_{n+1},x_n)
-D_{f_n}(x,x_n)\big).
\end{align}
On the other hand, \eqref{e:2604} and the convexity of $\psi$
entail that
\begin{align}
(\forall x\in C)\quad
\psi(x_{n+1})
&\leq\psi(x_n)+\pair{x_{n+1}-x_n}{\nabla\psi(x_n)}
+(1-\varepsilon)\gamma_n^{-1}D_{f_n}(x_{n+1},x_n)
\nonumber\\
&=\psi(x_n)+\pair{x-x_n}{\nabla\psi(x_n)}
+\pair{x_{n+1}-x}{\nabla\psi(x_n)}
\nonumber\\
&\quad\;+(1-\varepsilon)\gamma_n^{-1}D_{f_n}(x_{n+1},x_n)
\nonumber\\
&\leq\psi(x)+\pair{x_{n+1}-x}{\nabla\psi(x_n)}
+(1-\varepsilon)\gamma_n^{-1}D_{f_n}(x_{n+1},x_n).
\label{e:6584}
\end{align}
Altogether, upon adding \eqref{e:9316} and \eqref{e:6584}, we
obtain
\begin{equation}
\label{e:7610}
(\forall x\in C)\quad
\theta(x_{n+1})+\gamma_n^{-1}D_{f_n}(x,x_{n+1})
+\varepsilon\gamma_n^{-1}D_{f_n}(x_{n+1},x_n)
\leq\theta(x)+\gamma_n^{-1}D_{f_n}(x,x_n).
\end{equation}
In particular, since $x_n\in C$,
\begin{equation}
\label{e:7616}
\theta(x_{n+1})+\gamma_n^{-1}\big(D_{f_n}(x_n,x_{n+1})
+\varepsilon D_{f_n}(x_{n+1},x_n)\big)
\leq\theta(x_n).
\end{equation}
This shows that 
\begin{equation}
\label{e:7902}
\big(\theta(x_n)\big)_{n\in\NN}\;\text{decreases.}
\end{equation}
In turn, using the coercivity of $\theta$,
we infer that $(x_n)_{n\in\NN}$ is bounded, which secures
\ref{t:1a} in Theorem~\ref{t:1}. It remains to verify
that Algorithm~\ref{a:2} is focusing.
Towards this end, let $z\in\mathscr{S}$ and suppose that
\begin{equation}
\label{e:7980}
\big(D_{f_n}(z,x_n)\big)_{n\in\NN}\;\text{converges}
\end{equation}
and
\begin{equation}
\label{e:7372}
\varepsilon\sum_{n\in\NN}D_{f_n}(x_{n+1},x_n)
\leq\sum_{n\in\NN}(1-\kappa\gamma_n/\alpha)
D_{f_n}(x_{n+1},x_n)<\pinf.
\end{equation}
Set $\gamma=\inf_{n\in\NN}\gamma_n$ and $\ell=\lim D_{f_n}(z,x_n)$.
It follows from \eqref{e:7610} applied to $z\in C$ that
\begin{equation}
\label{e:7613}
(\forall n\in\NN)\quad
\gamma\big(\theta(x_{n+1})-\min\theta(\XX)\big)+
D_{f_n}(z,x_{n+1})+\varepsilon D_{f_n}(x_{n+1},x_n)\leq
D_{f_n}(z,x_n)
\end{equation}
and therefore from condition~\ref{a:2b} in 
Algorithm~\ref{a:2} that
\begin{align}
\label{e:9641}
&(\forall n\in\NN)\quad
\gamma\big(\theta(x_{n+1})
-\min\theta(\XX)\big)
+D_{f_{n+1}}(z,x_{n+1})
+\varepsilon D_{f_n}(x_{n+1},x_n)
\nonumber\\
&\hskip 24mm
\leq(1+\eta_n)\Big(
\gamma\big(\theta(x_{n+1})-\min\theta(\XX)\big)
+D_{f_n}(z,x_{n+1})+\varepsilon D_{f_n}(x_{n+1},x_n)\Big)
\nonumber\\
&\hskip 24mm
\leq(1+\eta_n)D_{f_n}(z,x_n).
\end{align}
Hence, 
$\varlimsup\gamma(\theta(x_{n+1})-\min\theta(\XX))+\ell\leq\ell$
and therefore $\varlimsup(\theta(x_{n+1})-\min\theta(\XX))=0$.
Thus 
\begin{equation}
\label{e:k1940}
\theta(x_n)\to\min\theta(\XX).
\end{equation}
Now take $x\in\WC(x_n)_{n\in\NN}$, say $x_{k_n}\weakly x$.
By weak lower semicontinuity of $\theta$,
$\min\theta(\XX)\leq\theta(x)
\leq\varliminf\theta(x_{k_n})=\min\theta(\XX)$
and it follows that $x\in\Argmin\theta=\zer(A+B)$.
Consequently, Theorem~\ref{t:1} asserts that $(x_n)_{n\in\NN}$
converges weakly to a point in $\mathscr{S}$.

\ref{t:3ii-}: Combine \eqref{e:7902} and \eqref{e:k1940}.

\ref{t:3ii}\&\ref{t:3iii}:
Fix $z\in\mathscr{S}$ and set $\gamma=\inf_{n\in\NN}\gamma_n$. 
Arguing along the same lines as above, we obtain
\begin{equation}
\label{e:6164}
(\forall n\in\NN)\quad
\gamma\big(\theta(x_{n+1})-\min\theta(\XX)\big)
+D_{f_{n+1}}(z,x_{n+1})+\varepsilon D_{f_n}(x_{n+1},x_n)
\leq(1+\eta_n)D_{f_n}(z,x_n)
\end{equation}
and therefore \cite[Lemma~5.31]{Livre1} guarantees that
$\sum_{n\in\NN}(\theta(x_n)-\min\theta(\XX))<\pinf$.
In addition, $(\theta(x_n)-\min\theta(\XX))_{n\in\NN}$
is decreasing by virtue of \eqref{e:7902}. However, recall that
if $(\alpha_n)_{n\in\NN}$ is a decreasing sequence in $\RP$
such that $\sum_{n\in\NN}\alpha_n<\pinf$, then
\begin{equation}
\alpha_n=o\bigg(\frac{1}{n}\bigg)\quad\text{and}\quad
\Sum_{n\in\NN}n(\alpha_n-\alpha_{n+1})<\pinf.
\end{equation}
Hence, $\theta(x_n)-\min\theta(\XX)=o(1/n)$
and $\sum_{n\in\NN}n(\theta(x_n)-\theta(x_{n+1}))<\pinf$.
Consequently, since \eqref{e:7610} yields
\begin{equation}
(\forall n\in\NN)\quad\gamma_n^{-1}D_{f_n}(x_n,x_{n+1})
+\varepsilon\gamma_n^{-1}D_{f_n}(x_{n+1},x_n)
\leq\theta(x_n)-\theta(x_{n+1}),
\end{equation}
we infer that
$\sum_{n\in\NN}n(D_{f_n}(x_{n+1},x_n)+D_{f_n}(x_n,x_{n+1}))<\pinf$.
\end{proof}

\begin{remark}
\label{r:12}
Let us relate Theorem~\ref{t:3} to the literature.
\begin{enumerate}
\item
\label{r:12i}
The conclusions of items~\ref{t:3i} and \ref{t:3ii-}
are obtained in
\cite[Theorem~1(2)]{Nguy17} under
more restrictive conditions on the sequences 
$(\gamma_n)_{n\in\NN}$ and $(f_n)_{n\in\NN}$. Thus,
we do not require in Theorem~\ref{t:3} the additional
condition $(\forall n\in\NN)$
$(1+\eta_n)\gamma_n-\gamma_{n+1}\leq\alpha\eta_n/\kappa$. 
Furthermore, we do not suppose either that 
${-}\ran\nabla\psi\subset\dom\varphi^*$ or that the functions
$(f_n)_{n\in\NN}$ are cofinite. 
\item
\label{r:12ii}
Items \ref{t:3ii} and \ref{t:3iii} are new even in
Euclidean spaces. In the finite-dimensional setting,
partial results can be found in \cite{Baus17}, where:
\begin{enumerate}
\item
A single convex function is used: $(\forall n\in\NN)$ $f_n=f$.
\item
The viability of the sequence $(x_n)_{n\in\NN}$ is a blanket
assumption, while it is guaranteed in Theorem~\ref{t:3}.
\item
Only the rates $\sum_{n\in\NN}D_{f}(x_{n+1},x_n)<\pinf$ and
$(\varphi+\psi)(x_n)-\min(\varphi+\psi)(\XX)=O(1/n)$ are obtained.
\end{enumerate}
\end{enumerate}
\end{remark}

\subsection{Further applications}
\label{sec:quang}

Theorems~\ref{t:1} and \ref{t:3} operate under broad assumptions
which go beyond those of the existing forward-backward methods of
\cite{Sico03,Opti14,Nguy17,Rena97} described in 
\eqref{e:5}--\eqref{e:8}. Here are two examples which do not fit
the existing scenarios and exploit this generality. 

\begin{example}
\label{ex:61}
Consider the setting of Problem~\ref{prob:1}.
Suppose, in addition, that the following hold:
\begin{enumerate}[label={\rm[\alph*]}]
\item
$A$ is uniformly monotone on bounded sets.
\item
There exist $\psi\in\Gamma_0(\XX)$ and $\kappa\in\RPP$
such that $B=\partial\psi$ and $(\forall x\in C)(\forall y\in C)$ 
$D_\psi(x,y)\leq\kappa D_f(x,y)$.
\item
$f$ is supercoercive.
\item
$\zer(A+B)\subset\intdom f$.
\end{enumerate}
Let $(\gamma_n)_{n\in\NN}$ be a sequence in $\RPP$ such that
$0<\inf_{n\in\NN}\gamma_n\leq\sup_{n\in\NN}\gamma_n<1/\kappa$,
take $x_0\in C$, and set $(\forall n\in\NN)$ $x_{n+1}=
(\nabla f+\gamma_n A)^{-1}(\nabla f(x_n)-\gamma_n \nabla\psi(x_n))$.
Then $(x_n)_{n\in\NN}$ converges strongly to the unique zero of
$A+\nabla\psi$.
\end{example}

The next example concerns variational inequalities.

\begin{example}
\label{ex:12}
Let $\varphi\in\Gamma_0(\XX)$, let $B\colon\XX\to 2^{\XX^*}$
be maximally monotone, let $f\in\Gamma_0(\XX)$ be 
essentially smooth, and set 
$C=(\intdom f)\cap\dom\partial\varphi$.
Suppose that $C\subset\intdom B$ and $B$ is single-valued on
$\intdom B$. Consider the problem of finding a point in 
\begin{equation}
\mathscr{S}=\menge{x\in C}{(\forall y\in\XX)\;
\pair{x-y}{Bx}+\varphi(x)\leq\varphi(y)},
\end{equation}
which is assumed to be nonempty. This is a special case of
Problem~\ref{prob:1} with $A=\partial\varphi$ and,
given $x_0\in C$, Algorithm~\ref{a:1} produces the iterations
$(\forall n\in\NN)$ $x_{n+1}=\prox_{\gamma_n\varphi}^{f_n}
(\nabla f_n(x_n)-\gamma_nBx_n)$. The weak convergence of
$(x_n)_{n\in\NN}$ to a point in $\mathscr{S}$ is discussed in
Theorem~\ref{t:1}. Even in Euclidean spaces, this scheme is new and
of interest since, as shown in \cite{Baus17,Joca16,Nguy17}, the 
Bregman proximity operator $\prox_{\gamma_n\varphi}^{f_n}$ may be
easier to compute for a particular $f_n$ than for the standard
kernel $\|\mute\|^2/2$. Altogether, our framework makes it
possible to solve variational inequalities by forward-backward
splitting with non-cocoercive operators and/or outside of Hilbert
spaces. 
\end{example}

\end{document}